\newcommand{\beq}{\begin{equation}}
\newcommand{\eeq}{\end{equation}}
\newcommand{\bea}{\begin{aligned}}
\newcommand{\eea}{\end{aligned}}
\newcommand{\bdm}{\begin{displaymath}}
\newcommand{\edm}{\end{displaymath}}
\newcommand{\barr}{\begin{array}}
\newcommand{\earr}{\end{array}}
\newcommand{\ben}{\begin{enumerate}}
\newcommand{\een}{\end{enumerate}}
\newcommand{\bde}{\begin{description}}
\newcommand{\ede}{\end{description}}
\numberwithin{equation}{section}
\newtheorem{teor}{Theorem}[section]
\newtheorem{prop}[teor]{Proposition}
\newtheorem{lem}[teor]{Lemma}
\newtheorem{Def}[teor]{Definition}
\newtheorem{rem}[teor]{Remark}
\newcommand{\R}{\mathbb{R}}
\newcommand{\N}{\mathbb{N}}
\newcommand{\PP}{\mathbb{P}}
\newcommand{\al}{\alpha}
\newcommand{\de}{\delta}
\newcommand{\X}{\mathcal{X}}
\newcommand{\M}{\mathcal{M}}
\newcommand{\g}{\mathfrak{g}}
\begin{document}
\title[Trait substitution tree]{Multi-time scales in adaptive dynamics: microscopic interpretation of a trait substitution tree model}
\author {Anton Bovier and Shi-dong Wang}

\address{A. Bovier\\Institut f\"ur Angewandte Mathematik\\Rheinische
   Friedrich-Wilhelms-Uni\-ver\-si\-t\"at Bonn\\Endenicher Allee 60\\ 53115
   Bonn, Germany}
\email{bovier@uni-bonn.de}
\address{S.-D. Wang\\Department of Statistics\\University of Oxford
\\1 South Parks Road
\\ Oxford, OX1 3TG, UK}
\email{shidong.wang@stats.ox.ac.uk}

\subjclass[2000]{92D25, 60J85, 37N25, 92D15, 60J75} 
\keywords{large population, rare migration, rare
mutation, time scales separation, trait substitution tree.}

\thanks{A. Bovier is supported in part by the German Research Foundation in the SFB 611
and the Hausddorff Center for Mathematics. S.-D. Wang was supported by a Hausdorff Scholarship while at the University of Bonn, and EPSRC Grant EP/I01361X/1 while at the University of Oxford.}

\date{\today}

\begin{abstract}
We consider a fitness-structured population model with competition
and migration between nearest neighbors. Under a combination of large
population and rare migration limits we are particularly interested
in the asymptotic behavior of the total population partition on
supporting trait sites. For the population without mutation on a
finite trait space we obtain the equilibrium configuration and
characterize the right time scale for fixation. For the model with
mutation on an infinite trait space a jump process-trait
substitution tree model is established on a rarer mutation time
scale against the rare migration constrained in terms of a large
population limit. Due to a change of the fitness landscape provoked by a new mutant,
some temporarily unfit types can be recovered from time to time.
In the end we shed light to illustrate sexual reproduction
in a diploid population on the genetic level.
\end{abstract}

\maketitle

\tableofcontents


\section{Introduction}\label{section 3.1}
In recent years a spatially structured population with migration
(dispersion) and local regulation, proposed by Bolker and Pacala
\cite{BP97}, Dieckmann and Law \cite{LD02} (in short BPDL process),
has attracted particular interest both from biologists and
mathematicians. It has several advantages over general branching
processes, which make it more natural as population models: the
quadratic competition term is used to prevent the population size
from escaping to infinity and the migration term is used to
transport the population mass from one colony to unoccupied colonies
for survival, and to further get colonized. There are mainly two
highlights of related papers. For instance, Etheridge \cite{Eth04}, Fournier and
M\'el\'eard \cite{FM04}, Hutzenthaler and Wakolbinger \cite{HW07},
have studied the extinction and
survival problems. Champagnat \cite{Cha06}, Champagnat and Lambert
\cite{CL07}, Champagnat and M\'el\'eard \cite{CM10}, M\'el\'eard and Tran \cite{MT09}, 
Dawson and Greven \cite{DG10} focus more on its long time behavior by multi-scale
analysis methods.

The main ingredient behind this model is logistic branching random
walks, that is, a combination of logistic branching populations with
spatial random walks (or migration) on trait sites. Under a
combination of a large population and rare mutation limits, a
so-called trait substitution sequence model (in short TSS) is
derived in \cite{Cha06}. The heuristics leading to the TSS model is
based on the biological assumptions of large population and rare
mutation, and on another assumption that no two different types of
individuals can coexist on a long time scale: the selective
competition eliminates one of them. On the one hand, coexistence and
diversity after entering of new mutants are not allowed due to the
deficient spatial structure. On the other hand, natural selection is
not only limited to competition mechanism but also is often combined
with a survival strategy-migration mechanism. In spite of this
heuristics, this model is still lack of a rigorous mathematical
basis.

The adaptive-dynamics approach is controversially debated since it
was criticized only feasible in the context of phenotypic approach.
However, the link with its corresponding genetic insight has rarely been
developed (see Eshel \cite{Esh96}). As far as sexual reproduction
is concerned, population genetic models have dominated for
many years since they have been proved powerful to model diploid populations on the genetic level. For a
finite gene pool of fixed size, main evolutionary mechanisms like
mutation, selection, and reproduction are theoretically
tractable though they can take a role in a very complicated way especially after
sexual reproduction gets involved. The effect of
sexual reproduction is more complicated to characterize mostly because random
shuffling of genes may create many genotypes for natural selection to act on,
 which makes mathematical analysis more difficult (e.g., see \cite{CMM11} for the case with three genotypes combined by two alleles). 
In contrast, adaptive-dynamics approach is mainly concerned
with the long-term evolutionary property but usually ignores the
genetic complications. Is there a way to embody features such
as sexual reproduction arising on
the genetic level but at the same time in which one can study its long term behavior via
the quantitative trait method, i.e., taking advantage of
adaptive-dynamics approach on the phenotypic level? This is the
biological motivation of this paper. 

In this paper we propose a new model to justify the above arguments. We introduce
a spatial migration mechanism among possible genotypes, which can be
viewed as a result of fusion of any pair of alleles out of a fixed finite allele pool
of a diploid population. After natural selection acting on a short-term
evolution time scale, the population can attain an equilibrium
configuration according to the fitness landscape. Each time there
enters a new mutant gene (allele) into the gene pool, the genotype
space is enlarged due to formation of new genotypes, and the spatial migration
can be used to characterize the reshuffling procedure on the way to
a new equilibrium configuration. Loosely speaking, the spatial
movement is used to compensate the simplicity of genetic
reproduction in adaptive-dynamics approach. The critical point we
need to take care of is to distinguish these different time scales
after introducing fitness spatial structure in the model.

The novelty of this model differs from previous models in three key
aspects. Firstly, no genetic information is lost on any time scale.
Some genotype containing a specific deleterious allele may be
invisible due to its temporarily low fitness on the migration time
scale, but it can recover on a longer mutation time scale due to the entering of a new
mutant allele and the reshuffling of the genotype space. For example, some
epidemic virus may become popular periodically because of a
change of its mutated genetic structure or a genetic change of its
potential carrier. Secondly, thanks to the fitness spatial structure
endowed on finite genotype space, coexistence is allowed under the
assumption of nearest-neighbor competition and migration. This
distinguishes our model from the classical adaptive-dynamics,
which often converges to a monomorphic equilibrium. What is more, we
derive a well-defined branching tree structure in the limiting
system, which is like a spatial version of the Galton-Watson
branching process. Last but not least, the idea of introducing the
spatial migration to interpret sexual reproduction can provide a link
between adaptive-dynamics and its genetics counterpart. In
particular, similar consideration can be done to quantify more
complicated sexual reproduction model than our toy case from the genetics
side by mapping it to spatial migration model (see Section \ref{section 3.5}).

As a reminder, we want to mention some recent progress in the interacting fields of adaptive dynamics and genetics. 
Champagnat and M\'el\'eard  \cite{CM10} relax the assumption of non-coexistence condition in \cite{Cha06}
and obtain a polymorphic evolution sequence (PES) as a generalization of the TSS model,
allowing coexistence of several traits in the population. However, still unfit allelic traits can be excluded from the 
evolutionary history and may never recover, depending on the Jacobian matrices of Lotka-Volterra systems.
Recently, Collet, M\'el\'eard and Metz \cite{CMM11} consider a diploid population model with sexual reproduction,
and obtain that population behaves on the mutation time scale as a jump process moving
between homozygous states (genotypes comprising of a pair of identical alleles).
Although their model puts a rigorous basis on Mendelian diploids, as mentioned in \cite[Section 6]{CMM11},
it is still under the restriction of an unstructured population and single locus genetics. 
Evans et al. \cite{CE09, ESW07} study a continuous time
evolving distribution of genotypes called \emph{mutation-selection balance} model where
recombination acts on a faster time scale than mutation and
selection. The intuition behind their asymptotic result is that the
mutation preserves the Poisson property whereas selection and
recombination respectively drive the population distribution away
from and toward Poisson. If all three processes are operating
together, one expects that recombination mechanism disappears in the limiting system.
This in some sense motivates us to specify migration in adaptive-dynamics to express one kind of 
genetic reshuffling like recombination in Evan's model. 
And migration should act on some well-defined fitness structure.

In this paper we are interested in the case when the migrant event
is rare with respect to branching events but not that rare as in
\cite{Cha06} (see Figure \ref{Figure_ParameterRegion}). In contrast,
we assume that there are infinite migrants from a resident
population on the natural time scale. Let a parameter $\epsilon$ be
the migration rate and $K$ be proportional to the initial population
size. We will impose the rare migration constraint $1\ll
K\epsilon\ll K$ on the population (see parameter region II in Figure
\ref{Figure_ParameterRegion}). As far as a finite-trait dynamic
system is concerned, to find out the exact fixation time scale
expressed in terms of the migration rate and population size is of
particular interest for us. Since the original model is not easy for
us to study due to the complicated interactions, we present here a
slightly modified model of the one in \cite{Cha06} but retaining the
essential machinery founded in the original model. This paper is
restricted with nearest-neighbor competitions and migrations along
the monotone fitness landscape. What is more, in order to study the
long time behavior, we introduce mutations to drive the population
to move forwards to more fitter configuration on a rare mutation time
scale, which is longer than the fixation time scale. Note that the
limit theorem arising in \cite{Cha06} can be applied consistently in
the model developed in this paper.

The purpose of this paper and the accompanying one \cite{BW11a} is
to justify a trait substitution tree process (in short TST) to
illustrate the coexistence phenomenon with spatial structure in
evolution theory, which is a purely atomic finite measure-valued
process. The present one is derived from the microscopic point of
view while the other one \cite{BW11a} is from the macroscopic
point of view. Combining these two papers together with \cite{Cha06, CM10},
the entire framework on (rare) migration against (large) population
limit can be fully characterized, and it results in different
rescaling limits, TSS and TST respectively on different time scales.
In summary, the entire framework is as follows:
\begin{itemize}
\item Take large population and rare migration simultaneously by $K\epsilon\ll\frac{1}{\ln K}$, it leads to a TSS limit in \cite{Cha06}.
\item Firstly let $K\to\infty$, then add rare mutation by $\ln\frac{1}{\epsilon}\ll\frac{1}{\sigma}$ as $\epsilon\to 0$, it leads to a TST limit in \cite{BW11a}.
\item Take large population, rare migration and even rarer mutation all simultaneously constrained by $1\ll K\epsilon\ll K,\,\ln\frac{1}{\epsilon}\ll\frac{1}{K\sigma}$. That is our goal in this paper.
\end{itemize}

 \begin{figure}[hbtp]
 \centering
 \def\svgwidth{300pt}
 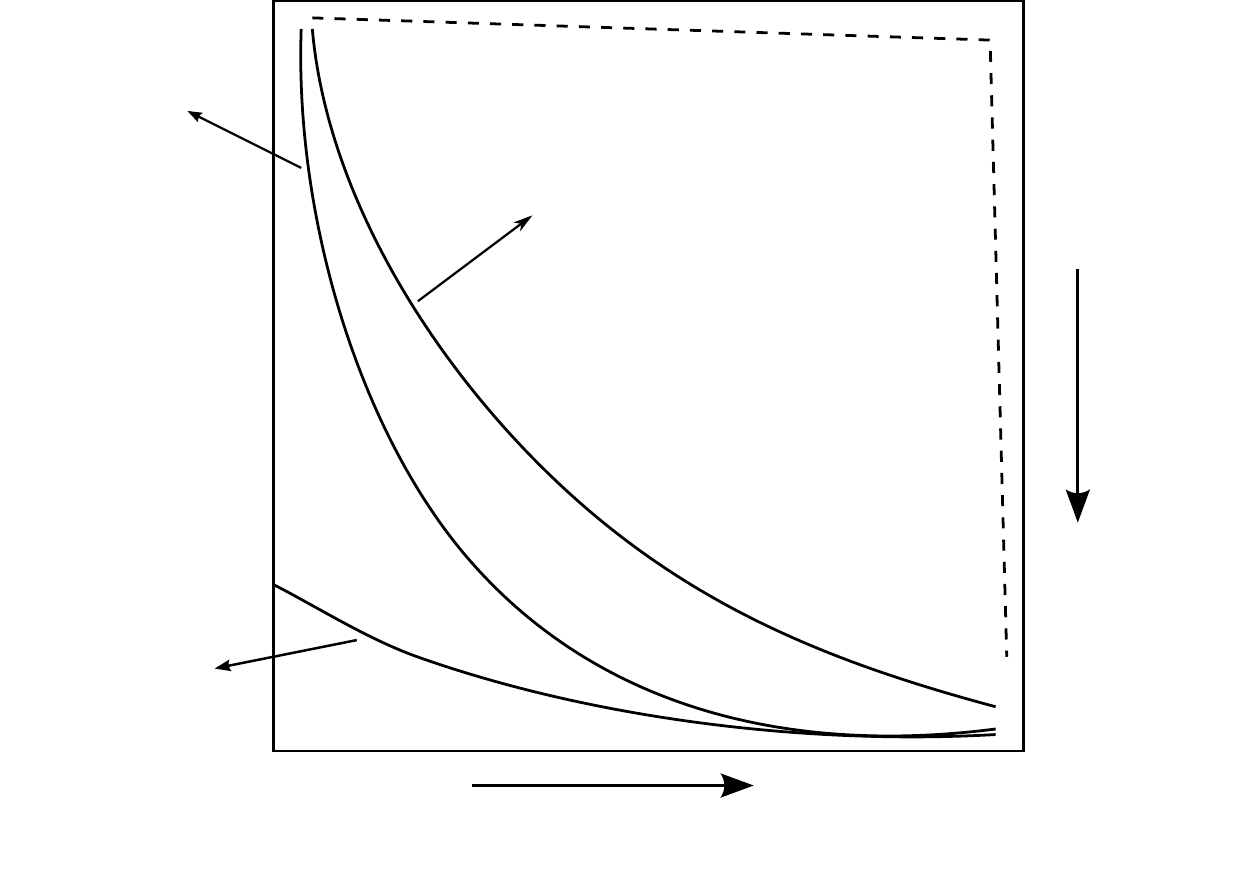
 \caption{{\footnotesize Division of parameter region: migration rate $\epsilon$ against population size $K$}}.
 \label{Figure_ParameterRegion}
 \end{figure}

The remainder of the paper is structured as follows. In Section
\ref{section 3.2}, we present a description of the individual-based
model. In Section \ref{section 3.3}, we consider the case without
mutation but on a finite trait space, and characterize the rare
migration limit against the large population limit. In Section
\ref{section 3.4}, concerning a modified population supported on an
infinite trait space by introducing mutations, we justify a
so-called trait substitution tree processes in the rare mutation
limit, which also appeared in \cite{BW11a}. In section
\ref{section 3.5}, we apply the previous results to a diploid
population. In the last section, related proofs for results in
previous sections are provided.


\section{Microscopic model}\label{section 3.2}

We begin with a description of an individual-based model. Assume that the population at time $t$ is composed of a finite number $I_t$ individuals characterized by their phenotypic traits $x_1(t), \ldots, x_{I_t}(t)$ belonging a compact subset $\X$ of $\R^d$.
We denote by $\M_F(\X)$ the set of non-negative finite measures on $\X$. Let $\M_a(\X)\subset \M_F(\X)$ be the set of counting measures on $\X$:
\[
\M_a(\X)=\left\{\sum\limits_{i=1}^n\de_{x_i}: x_1,\ldots, x_n\in\X, n\in\N\right\}.
\]
Then, the population process at time $t$ can be represented as:
\[
\nu_t=\sum\limits_{i=1}^{I_t}\delta_{X_i(t)}.
\]
Let $B(\X)$ denote the totality of functions on $\X$ which are bounded and measurable. For any $f\in B(\X)$, $\nu\in\M_F(\X)$, we use notation $\langle\nu,f\rangle=\int f d\nu$.

Let's specify the population process $(\nu_t)_{t>0}$ by introducing a sequence of demographic parameters:
\begin{itemize}
  \item $b(x)$ is the birth rate from an individual with trait $x$.
  \item $d(x)$ is the death rate of an individual with trait $x$ because of ``aging''.
  \item $\al(x,y)$ is the competition kernel felt by some individual with trait $x$ from another individual with trait $y$.
  \item $m(x, dy)$ is the migration law of an individual from trait site $x$ to site $y$.
  \item $\mu(x)$ is the mutation rate of an individual with trait $x$.
  \item $p(x,dh)$ is the law of mutant variation $h=y-x$ between a mutant $y$ and its resident trait $x$. Since the mutant trait $y=x+h$ should belong to $\X$, this law has its support in $\X-x:=\{y-x: y\in\X\}\subset \R^d$.
\end{itemize}

To specify the model without mutation mechanism, the infinitesimal generator of the $\M_a(\X)$-valued process is
given as follows, for any $F\in B(\M_a(\X))$:
\beq
\bea
LF(\nu)&=\sum\limits_{i=1}^I\left[F(\nu+\delta_{x_i})-F(\nu)\right]b(x_i)\\
          &+\sum\limits_{i=1}^I\left[F(\nu-\delta_{x_i})-F(\nu)\right]\left(d(x_i)+\sum\limits_{j\neq i}^I\alpha(x_i,x_j)\right)\\
          &+\sum\limits_{i=1}^I\sum\limits_{x_j\neq\,x_i}\left[F(\nu+\delta_{x_j}-\delta_{x_i})-F(\nu)\right]m(x_i,x_j).
\eea
\eeq
The first term above describes the clonal reproduction at the mother's site. The second term describes death of an individual $x_i$ either due to aging or competition from another individual $x_j$. And the last term describes the migration of an individual from a trait site $x_i$ to a site $x_j$.

By introducing a parameter $K\in\N$, we rescale the population size and competition kernel by $K$. We will show later, as $K$ tends to infinity, one can get different large population limits by various well-chosen rescaling procedures. Furthermore, the population process can be parameterized by another parameter $\epsilon$ governing the rate of migration law $m(x_i,x_j)$ in terms of population size scaling parameter $K$.

For any $K\in\N$, instead of studying the above process $(\nu_t^K)_{t\geq0}$, it is more convenient to consider a sequence of rescaled measure-valued processes:
\begin{equation}
X_t^K:=\frac{1}{K}\nu_t^K=\frac{1}{K}\sum_{i=1}^{I_t^K}\delta_{x_i}
\end{equation}
where $X_{\cdot}^K$ is a $\M_F(\X)-$valued process with the following infinitesimal generator:
\beq
\bea\label{generator_X_epsilon}
 L^K F(\nu)
    &= \int_{\X}\left[F(\nu+\frac{\delta_x}{K})-F(\nu)\right]b(x)K\nu(dx)\\
    &+\int_{\X}\left[F(\nu-\frac{\delta_x}{K})-F(\nu)\right]\left(d(x)+\int_{\X}\alpha(x,y)\nu(dy)\right){K}\nu(dx)\\
    &+\epsilon\int_{\X}\int_{\X}\left[F(\nu+\frac{\de_{y}}{K}-\frac{\de_{x}}{K})-F(\nu)\right]m(x,dy)K\nu(dx).
\eea
\eeq

Notice that we rescale the competition kernel $\al$ by $K$ so that the system mathematically makes sense when we take a large population limit. From the biological point of view, $K$ can be interpreted as scaling the resource or area available.

Let us denote by ($\mathbf{A}$) the following assumptions.
\begin{description}
\item[(A1)]$\exists ~\bar{b},~ \bar{d},~\underline{\alpha}~, \bar{\alpha},$ such that $~0<b(x)\leq\bar{b},~ 0<d(x)\leq\bar{d},~ 0<\underline{\alpha}\leq\alpha(x,y)\leq\bar{\alpha}, \text{and}~ b(x)-d(x)>0, ~\forall x\in\X.$
\item[(A2)]$\forall ~x,y$ $\in$ $\X$,~$\bar f(x,y)\cdot \bar f(y,x)<0$, where the fitness functions

 $\bar f(y,x)=b(y)-d(y)-\alpha(y,x)\bar{n}(x)$ and $\bar{n}(x)=\frac{b(x)-d(x)}{\alpha(x,x)}$,

 $\bar f(x,y)=b(x)-d(x)-\alpha(x,y)\bar{n}(y)$ and $\bar{n}(y)=\frac{b(y)-d(y)}{\alpha(y,y)}$.
\end{description}

Notice that assumption (A1) guarantees that the process with the infinitesimal generator \eqref{generator_X_epsilon} is well defined (refer to \cite{FM04}). Assumptions (A2) gives the non-coexistence condition for any pair of distinct competing traits.


\section{Early time window on an finite trait space as $K\to\infty$}\label{section 3.3}

We firstly review some exsiting results for this model. Champagnat \cite[Theorem 1]{Cha06} proved the following result by the time scales separation technique, which can be extended to a more general case in accelerated population dynamics \cite{Wang11}.

\begin{teor}\label{TSS Theorem of Champagnat}
Admit assumptions $(\textbf{A}1)$ and $(\textbf{A}2)$. Suppose that $X_0^K=\frac{N_0^K}{K} \delta_x$ such that $\frac{N_0^K}{K}\stackrel{\text{law}}{\to} n_0
>0$ as $K\rightarrow +\infty$, and $\forall ~C>0$,
\beq\label{Champagnat condition}
\exp\{-CK\}\ll K\epsilon \ll \frac{1}{\ln K}.
\eeq
Then, $(X^K_{t/K\epsilon}, t\geq 0)$ converges in the sense of f.d.d. to
\begin{align*}
Y_t= \left\{
  \begin{array}{ll}
   n_0\delta_x, & t=0 \\
   \bar{n}(\eta_t)\delta_{\eta_t}, & t>0
 \end{array}
\right.
\end{align*}
where the Markov jump process $(\eta_t, t\geq 0)$ satisfies $\eta_0=x$ with an infinitesimal generator:
\begin{equation}
          A\varphi(x)=\int_{\X}(\varphi(y)-\varphi(x))
          \bar{n}(x)\frac{[\bar f(y,x)]_+}{b(y)}m(x,dy).
\end{equation}
\end{teor}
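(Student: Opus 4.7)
The plan is to exploit the separation between the natural time scale (on which birth, death and competition act at rate $O(1)$) and the migration time scale $1/(K\epsilon)$ via a three-phase invasion analysis between successive migration events. On the natural time scale the resident population, starting from $Kn_0$ individuals at trait $x$, is a density-dependent birth-and-death chain whose rescaled density converges to the logistic ODE $\dot n=n(b(x)-d(x)-\alpha(x,x)n)$ with globally stable equilibrium $\bar n(x)$; standard large-deviation estimates for such chains show that once the density is close to $\bar n(x)$, it remains so for a time exponentially long in $K$. The left condition $K\epsilon\gg e^{-CK}$ guarantees that the first migration event occurs well before the resident exits a neighborhood of $\bar n(x)$, while the right condition $K\epsilon\ll 1/\ln K$ guarantees that a single migrant's fate (extinction or fixation) is resolved before any second migrant arrives. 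On the rescaled clock $t\mapsto t/(K\epsilon)$, migration attempts from $x$ therefore form, to leading order, a Poisson process of intensity $\bar n(x)\,m(x,dy)$.

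Conditional on a migrant arriving at site $y$ while the resident sits near $\bar n(x)$, one couples the $y$-descendant population from above and below with two branching processes of birth rate $b(y)$ and death rate $d(y)+\alpha(y,x)\bar n(x)\pm o(1)$; the coupling is valid as long as the $y$-mass stays small and the $x$-mass stays within a fixed tolerance of $\bar n(x)$. Classical branching-process theory then yields, up to $o(1)$, the survival probability $[\bar f(y,x)]_+/b(y)$ and a time of order $\ln K$ either to extinction or to reach a macroscopic threshold $\eta K$. If the migrant line survives, one enters Phase~2, where both coordinates of $(\langle X^K_t,\mathbf{1}_{\{x\}}\rangle,\langle X^K_t,\mathbf{1}_{\{y\}}\rangle)$ are of order $1$: the process is close to the solution of the competitive Lotka--Volterra ODE by the Ethier--Kurtz functional law of large numbers, and assumption (A2) together with $\bar f(y,x)>0$ forces this ODE to its attractor $(0,\bar n(y))$ in time $O(1)$. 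A symmetric Phase~3 uses a subcritical branching coupling (subcritical because $\bar f(x,y)<0$) to drive the residual $x$-mass to zero in a further $O(\ln K)$ time.

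The total invasion time is $O(\ln K)=o(1/(K\epsilon))$ under the standing hypothesis, so on the rescaled clock each successful invasion looks instantaneous; unsuccessful migrants simply reset the configuration to $\bar n(x)\delta_x$ and are invisible in the limit. Multiplying the Poisson migration intensity $\bar n(x)\,m(x,dy)$ by the effective fixation probability $[\bar f(y,x)]_+/b(y)$ gives the rate appearing in the generator $A$, and the strong Markov property applied at each fixation time yields convergence of the rescaled process to $\bar n(\eta_t)\delta_{\eta_t}$ in the sense of finite-dimensional distributions. The main obstacle is the quantitative control of Phase~1: one must verify that the resident density does not drift out of a small neighborhood of $\bar n(x)$ during the $O(\ln K)$ invasion window (so that the coupling constants $d(y)+\alpha(y,x)\bar n(x)$ are uniformly accurate), that no second migration event occurs in that window, and that the extinction/survival dichotomy for the coupled branching processes holds uniformly in $K$. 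These quantitative estimates constitute the technical core of the argument in \cite{Cha06}.
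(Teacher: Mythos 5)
Your proposal is correct and follows essentially the same route as the proof this paper relies on: the theorem is quoted from \cite{Cha06} without reproof, and Champagnat's argument there is precisely the three-phase time-scale-separation scheme you describe (large-deviation stability of the resident near $\bar n(x)$, branching-process couplings giving the fixation probability $[\bar f(y,x)]_+/b(y)$ in time $O(\ln K)$, Lotka--Volterra attraction under (A2), and the comparison $\ln K \ll 1/(K\epsilon) \ll e^{CK}$ making invasions instantaneous on the rescaled clock). Your identification of the quantitative control of Phase~1 as the technical core is also accurate.
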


 \begin{figure}[hbtp]
 \centering
\includegraphics[width=220pt]{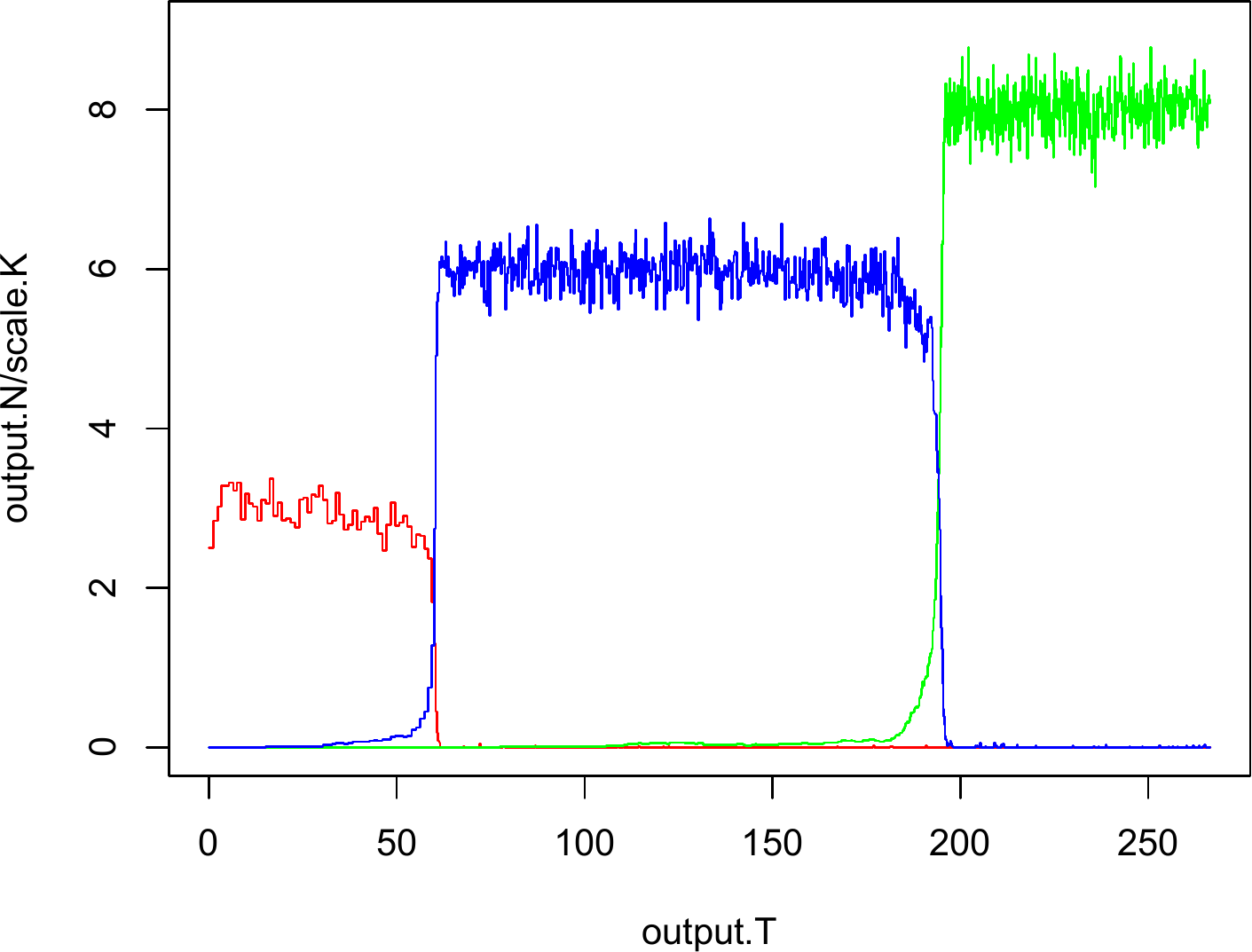}
\includegraphics[width=220pt]{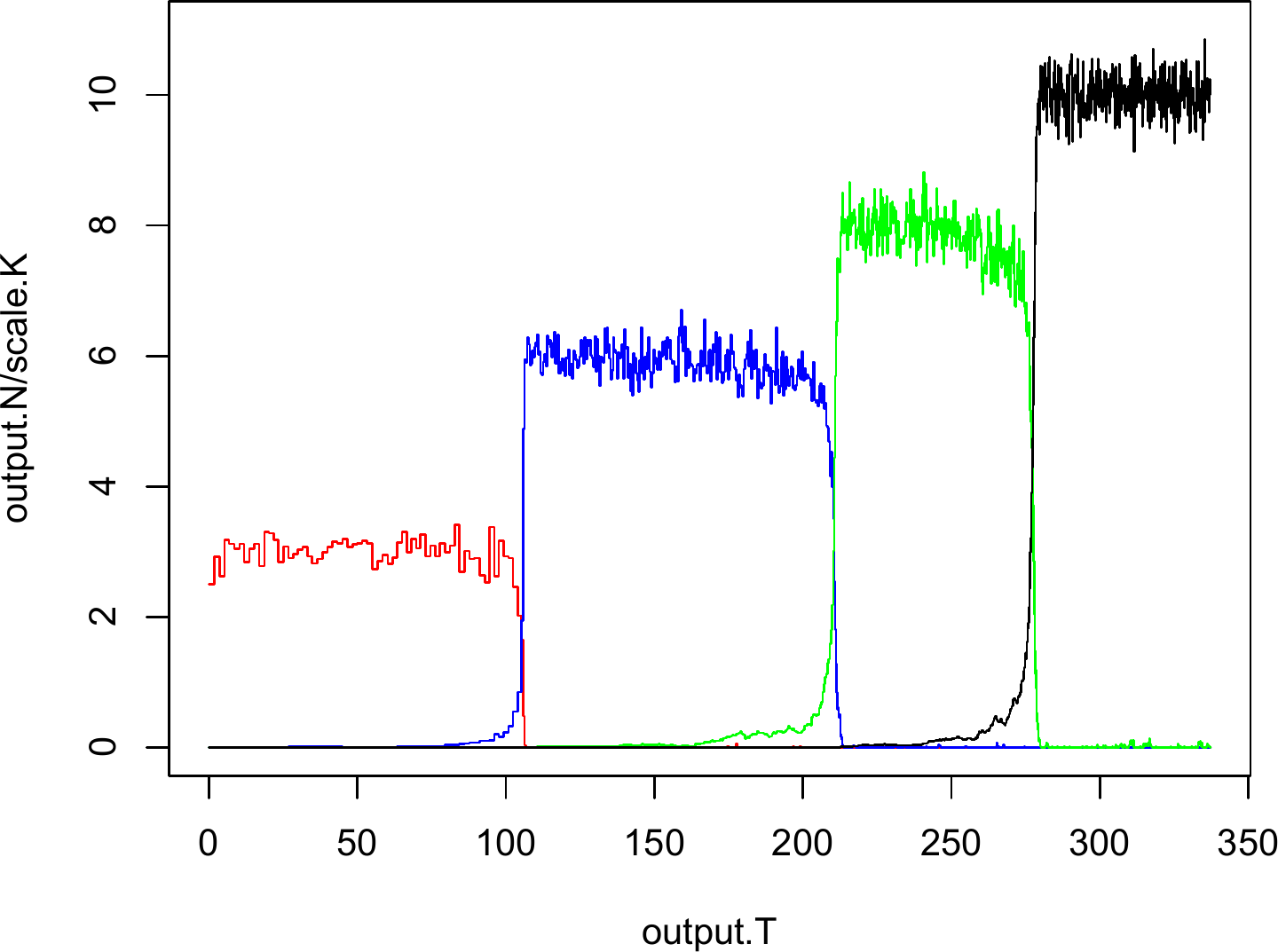}
\caption{\footnotesize Simulations of the trait substitution sequence model arising in Theorem \ref{TSS Theorem of Champagnat}.}
\label{Figure TSS_simulation}
 \end{figure}

\begin{rem}
\begin{itemize}
\item
The migration time scale is of order $\frac{1}{K\epsilon}$ whereas the fixation time scale starting from one migrant is of order $\ln K$. The population is kept monomorphic on the rare migration time scale. The rare migration parameter region constrained by \eqref{Champagnat condition} is denoted by the region $\textbf{I}$ in Figure \ref{Figure_ParameterRegion}.
\item
As showed in Figure \ref{Figure TSS_simulation}, it simulates a TSS model with trait space comprising of three types in the left panel while it simulates a four-type case  in the right panel. We mark the population density of trait $x_0, x_1, x_2, x_3$ by red, blue, green and black colored curves respectively. Take $b(x_0)=3,\, b(x_1)=6,\, b(x_2)=8,\, b(x_3)=10$ and death rates $d(x_i)\equiv 0,\,i=0,1,2,3.$ Take competition kernel $\al\equiv 1$,~ migration kernel $m\equiv 0.5$, and migration parameter $\epsilon=K^{-2}$, where initial population size $K=100$.
\end{itemize}
\end{rem}

In \cite{BW11a}, we firstly let $K$ tend to infinity in \eqref{generator_X_epsilon} and obtain a deterministic limit. Then, we consider the rescaling limit of the deterministic system supported on a finite trait space in a slow migration limit. That is actually an extreme case where it attains the so-called \emph{trait substitution tree} by taking a two-step limit along the marginal path (see dashed path in Figure \ref{Figure_ParameterRegion}). In terms of the individual-based population, it is of particular interest for us to give a microscopic interpretation of the TST process under apropriate constraints. Prior to the following theorem, we list the following assumption (\textbf{B}) to attain our main result later.
\begin{description}
\item [(B1)] For any finite number of types $L\in\N$, it has a monotonously increasing fitness landscape: $x_0\prec x_1\prec\ldots\prec x_L$, where $x_0\prec x_1$ denotes $$\bar f(x_0,x_1)<0,\,\bar f(x_1,x_0)>0$$.
\item [(B2)] Nearest-neighbor migration and competition, i.e.
    $m(x_i,x_j)=\al(x_i,x_j)=0$ for any $\mid i-j\mid>1$.
\item [(B3)] For any $i\geq 2$,
    \beq
    \frac{i}{b(x_i)-d(x_i)}\geq \frac{1}{\bar{f}(x_i,x_{i-1})}+\frac{1}{\bar{f}(x_{i-1},x_{i-2})}+\cdots+\frac{1}{\bar{f}(x_1,x_0)}.
    \eeq
\end{description}

Note that assumption (B3) is not necessary for us to obtain the following theorem. There actually exist a variety of different possible paths to converge to the equilibrium configuration determined up to the ordered sequence of traits as in assumption (B1).
However, thanks to assumption (B3), it brings us a lot convenience to prove the theorem without losing intrinsic features.

We inherit some notations from \cite{BW11a}, denote configurations by $\Gamma^{(L)}:=\sum\limits_{i=0}^l\bar{n}(x_{2i})\de_{x_{2i}}$ if $L=2l$ and $\Gamma^{(L)}:=\sum\limits_{i=1}^{l+1}\bar{n}(x_{2i-1})\de_{x_{2i-1}}$ if $L=2l+1$ for any $l\in\N\cup 0$.

\begin{teor}\label{TST Theorem on finite trait space_Micro}
Admit assumptions $(\textbf{A}1)$ and $\textbf{B}$. Consider the processes $(X_t^K)_{t\geq 0}$ on the trait space $\X=\{x_0,x_1,\ldots,x_L\}$. Suppose that $X_0^K=\frac{N_0^K}{K} \delta_{x_0}$ such that $\frac{N_0^K}{K}\stackrel{\text{law}}{\to} n_0
>0$ as $K\rightarrow +\infty$, and
\beq\label{rare migration condition}
1\ll K\epsilon\ll K.
\eeq
Then there exists a constant $\bar t_L>0$, such that for any $t>\bar t_L$,
$\lim\limits_{K\to\infty}X^K_{t\ln\frac{1}{\epsilon}}\stackrel{\text{(d)}}{=}\Gamma^{(L)}$ under the total variation norm.
\end{teor}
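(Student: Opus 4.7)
The plan is to derive the microscopic result by combining two ingredients: a uniform law-of-large-numbers coupling of $X^K$ with the deterministic hydrodynamic limit $n^\epsilon$ on a time horizon of order $\ln(1/\epsilon)$, and the macroscopic convergence $n^\epsilon(t\ln(1/\epsilon))\to\Gamma^{(L)}$ that has already been established in the companion paper \cite{BW11a}. The regime $1\ll K\epsilon$ is precisely what makes the migration flux between neighbouring sites of size $K\epsilon\to\infty$, so that stochastic fluctuations of macroscopic-sized coordinates are averaged out and a deterministic coupling is viable. Once that coupling is in place, invoking \cite{BW11a} produces the stage-by-stage cascade dictated by assumption (\textbf{B}): an initial invasion of $x_1$ by migrants from $x_0$; followed by an alternating sequence of displacements in which each new dominant $x_i$ eliminates its nearest-neighbour predecessor $x_{i-1}$, while simultaneously the trait $x_{i-2}$, previously suppressed at level $\Theta(\epsilon)$ by the migration-competition balance, is released and regrows to its carrying capacity; and so on until $x_L$ itself has invaded. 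Assumption (B3) is what guarantees that the total duration of this cascade, measured in units of $\ln(1/\epsilon)$, stays bounded by the explicit constant $\bar t_L$.

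For the stochastic coupling I would use the Fournier-M\'el\'eard semimartingale representation of $X^K$, decomposing its evolution into a drift that reproduces the Lotka-Volterra-plus-migration right-hand side driving $n^\epsilon$, and a purely discontinuous martingale $M^K$ whose predictable quadratic variation is bounded by $K^{-1}C\langle X^K_t,1\rangle$ for a constant $C$ depending only on the bounds in (A1). Since $\langle X^K_t,1\rangle$ is uniformly controlled in $K$ and $\epsilon$ by a logistic comparison, an exponential martingale inequality applied on the horizon $[0,T\ln(1/\epsilon)]$ together with a Gronwall step in the drift yields $\sup_{t\leq T\ln(1/\epsilon)}\|X^K_t-n^\epsilon(t)\|_{TV}\to 0$ in probability whenever $K\gg(\ln(1/\epsilon))^2$, a constraint compatible with the assumed regime $1\ll K\epsilon\ll K$. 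Combined with the deterministic convergence from \cite{BW11a}, this already gives the theorem on those parts of the trajectory where all non-negligible coordinates are of macroscopic order.

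The main obstacle lies inside the invasion windows, when some coordinates $n^\epsilon_i(t)$ are as small as $\epsilon^c$ for some $c\in(0,1)$: there the deterministic drift is exponentially small in $\ln(1/\epsilon)$ and a naive LLN bound breaks down. The remedy, in the spirit of \cite{Cha06}, is to dominate the microscopic subpopulation at $x_i$ from above and below by supercritical linear birth-death processes and track their growth to macroscopic size. The analysis is however substantially easier than in the TSS regime, because migrants arrive at rate $\Theta(K\epsilon)\to\infty$, so that the invading population is continuously re-seeded and never enters the quasi-extinction window which in \cite{Cha06} required a delicate first-passage analysis of single migrants. One thus obtains an estimate of each invasion time as $\ln(1/\epsilon)/\bar f(x_i,x_{i-1})+O(1)$ in probability, after which a fast Lotka-Volterra competition phase of $O(1)$ length eliminates the previous dominant and the regrowth of the released distance-two trait completes. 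Chaining these $L$ invasion-plus-competition-plus-regrowth segments, whose total length under (B3) is at most $\bar t_L$ in units of $\ln(1/\epsilon)$, produces the desired convergence of $X^K_{t\ln(1/\epsilon)}$ to $\Gamma^{(L)}$ for every $t>\bar t_L$.
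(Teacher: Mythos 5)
Your overall strategy is sound and lands on the same multi-scale picture as the paper, but the route differs in two ways worth noting. First, the paper does not couple $X^K$ to the full deterministic system $n^\epsilon$ and then invoke \cite{BW11a}; it redoes the cascade directly at the stochastic level, by induction on $L$ with the three-trait case (Proposition \ref{Proposition_3type_Micro}) as base, applying the functional law of large numbers separately at each scale: the subpopulation at density $\Theta(\epsilon^c)$ is rescaled by $K\epsilon^c$, and since $K\epsilon\gg 1$ forces $K\epsilon^c\to\infty$ for the relevant exponents, the rescaled process converges to a linear ODE whose hitting time of level $\eta/\epsilon^c$ gives the $\bigl(c/\bar f\bigr)\ln\frac{1}{\epsilon}$ contribution. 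This is exactly why your proposed birth--death domination ``in the spirit of \cite{Cha06}'' is unnecessary machinery here: in the regime $K\epsilon\gg1$ even the $\epsilon$-level subpopulations contain a diverging number of individuals, so the small-coordinate windows are handled by the same LLN after rescaling, not by branching-process comparison. Your observation that continual re-seeding removes the quasi-extinction analysis is correct, but the paper pushes it further and dispenses with branching approximations altogether. Second, your quantitative claim that a Gronwall-plus-martingale argument controls the coupling error over a horizon of length $T\ln\frac{1}{\epsilon}$ under $K\gg(\ln\frac{1}{\epsilon})^2$ is optimistic: a crude Gronwall factor is $e^{CT\ln(1/\epsilon)}=\epsilon^{-CT}$, so a single global coupling would require a much stronger relation between $K$ and $\epsilon$; the paper avoids this by cutting the trajectory into phases delimited by hitting times of $\epsilon$- and $\eta$-levels and restarting the LLN in each phase.

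The one point I would call a genuine misreading is your account of assumption (B3). It is not what bounds the total duration of the cascade by $\bar t_L$ (that bound is just the sum of the per-stage invasion and recovery times, each of order $[\bar f(x_i,x_{i-1})]^{-1}\ln\frac{1}{\epsilon}$). Rather, (B3) rules out a competing path to fixation: the nearest-neighbour migration chain seeds $x_{2l+1}$ at density $\Theta(\epsilon^{2l+1})$ already at times of order $1$, and since its neighbours are then unpopulated this seed grows freely at rate $b(x_{2l+1})-d(x_{2l+1})$; (B3) is precisely the inequality ensuring this direct growth does not reach level $\eta$ before the stage-by-stage cascade arrives at $x_{2l}$. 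Your proposal takes the cascade ordering for granted, so without (B3) (or an argument replacing it) the claimed sequence of invasions is not justified. This does not threaten the theorem itself --- the paper remarks that (B3) is a convenience, not a necessity --- but it is a step your argument silently skips.
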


 \begin{figure}[hbtp]
 \centering
\includegraphics[width=220pt]{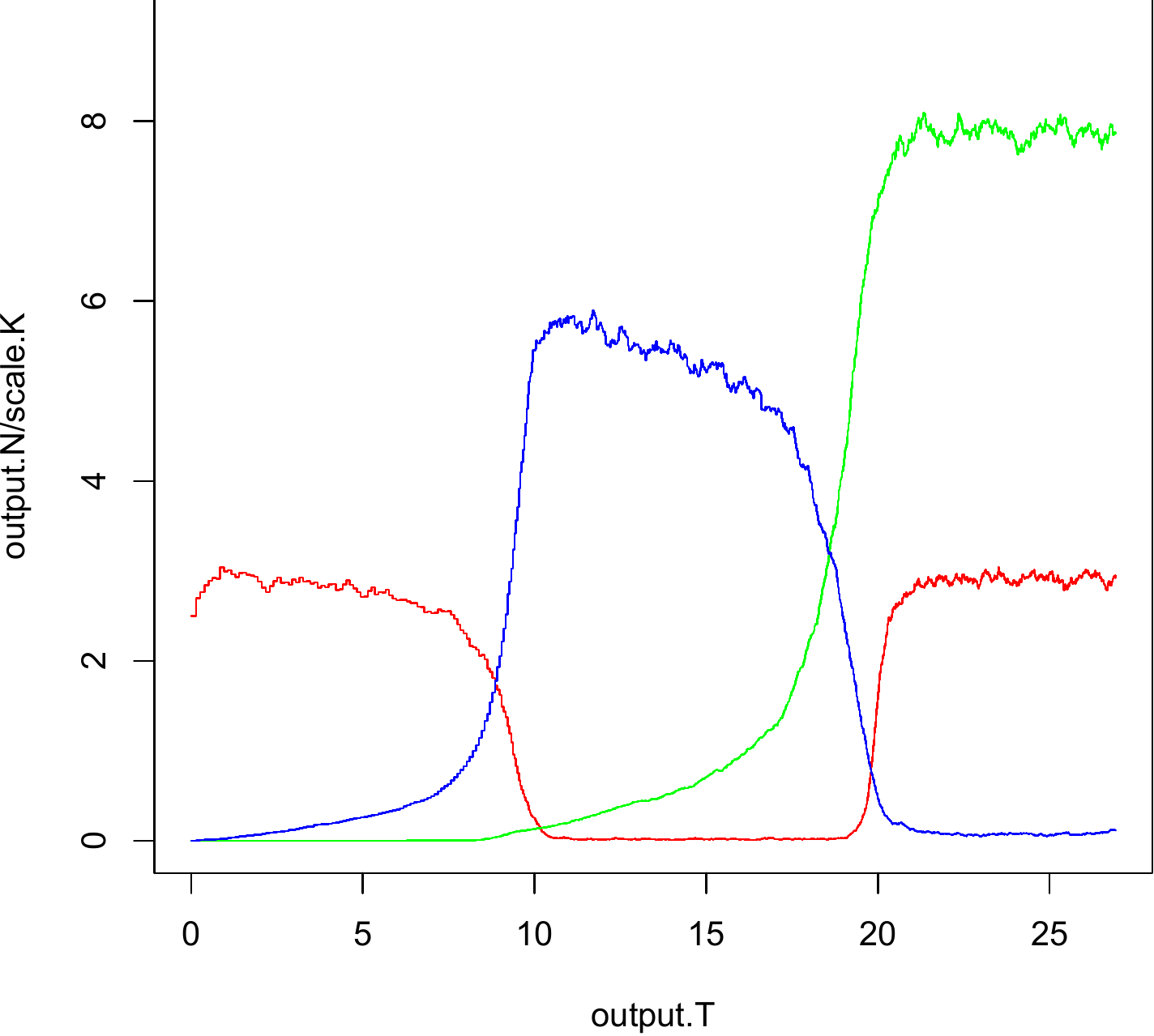}
\includegraphics[width=220pt]{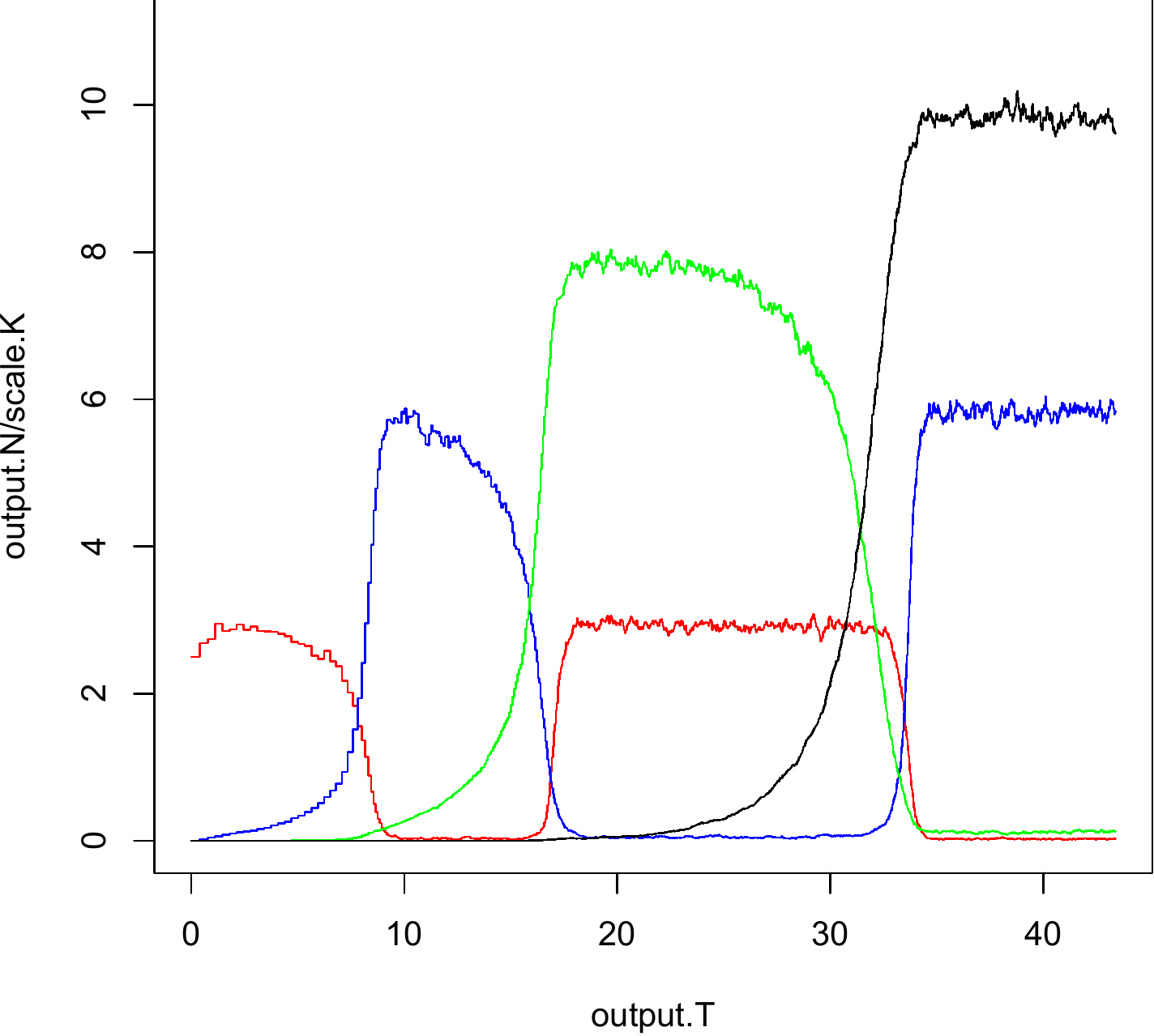}
\caption{\footnotesize Simulations of a trait substitution tree model arising in Theorem \ref{TST Theorem on finite trait space_Micro} on a three- and four-type trait space. }
\label{Figure TST_simulation_Micro}
 \end{figure}

\begin{rem}
\begin{itemize}
\item
We illustrate the theorem by simulations (see Figure \ref{Figure TST_simulation_Micro}). We take all the same parameters as in Figure \ref{Figure TSS_simulation} except replacing $\epsilon=K^{-\frac{4}{5}}$ in the three-type case and $\epsilon=K^{-\frac{3}{4}}$ in the four-type case with initial population size $K=1000$. Obviously, they both satisfy conditions \eqref{rare migration condition}.
\item
The parameter region for rare migration constrained by \eqref{rare migration condition} is denoted by the upper right region \textrm{\textbf{II}} in Figure \ref{Figure_ParameterRegion}. As analyzed in Theorem \ref{TST Theorem on finite trait space_Micro}, the fixation time scale is of order $\ln\frac{1}{\epsilon}$. The stable configuration for the three-type case is $\Gamma^{(2)}=3\de_{x_0}+8\de_{x_2}$ and it is $\Gamma^{(3)}=6\de_{x_1}+10\de_{x_3}$ for the four-type case. We will show in Theorem \ref{TST_Theorem_infinite trait_Micro} that the TST process jumps from $\Gamma^{(2)}$ to $\Gamma^{(3)}$ on an even rarer mutation time scale of order $\frac{1}{K\sigma}$ (see Figure \ref{Figure TST_simulation_Micro with mutation}).
\end{itemize}
\end{rem}


\section{Late time window with mutations as $K\to\infty$}\label{section 3.4}

Following the procedures we build up in \cite{BW11a}, in order to study the evolutionary behavior driven by new input on a much longer time scale, we introduce a mutation mechanism into the population generated by \eqref{generator_X_epsilon}. We now study the model with mutations formulated by the following generator supported on a compact set $\X$:
\beq\label{generator_X_epsilon,sigma_Micro}
\bea
 L^{K,\epsilon,\sigma} F(\nu)
    &= \int_{\X}\left[F(\nu+\frac{\delta_x}{K})-F(\nu)\right]b(x)K\nu(dx)\\
    &+\int_{\X}\left[F(\nu-\frac{\delta_x}{K})-F(\nu)\right]\left(d(x)+\int_{\X}\alpha(x,y)\nu(dy)\right){K}\nu(dx)\\
    &+\epsilon\int_{\X}\int_{\X}\left[F(\nu+\frac{\de_{y}}{K}-\frac{\de_{x}}{K})-F(\nu)\right]m(x,dy)1_{\{y\in
     \textrm{supp}\{\nu\}\}}K\nu(dx)\\
    &+\sigma\int_{\X}\int_{\R^d}\left[F(\nu+\frac{\de_{x+h}}{K})-F(\nu)\right]\mu(x)p(x,dh)K\nu(dx).
\eea
\eeq

Here we denote the process by $X_{\cdot}^{K,\epsilon,\sigma}$ with one more superscript $\sigma$, distinguishing from the one without mutation in the previous section.

Notice that the mutation kernel $p(x,dh)$ is used to introduce a new trait site to the previous finite trait space and enlarge the supporting trait space by one each time there enters a mutant, whereas the migration kernel only acts on current support sites of the population. Later we will see, under some rare mutation constraint (with respect to migration rate), the dominating power for fixation is mainly from exponential growth of intial migration particles. Before proceeding towards the main theorem, we now give some assumptions and the definition of the trait substitution tree, which appeared in \cite{BW11a}.

Assumption (\textbf{C}).
\begin{description}
\item[(C1)] For any given distinct traits $\{x_0, x_1,\cdots, x_n\}\subset\X, n\in\N$, there exists a total order permutation
\beq
x_{n_0}\prec x_{n_1}\prec\cdots\prec x_{n_{n-1}}\prec x_{n_n},
\eeq
where $x\prec y$ means that the fitness functions satisfy $\bar f(x,y)=b(x)-d(x)-\al(x,y)\bar n(y)<0$, and $\bar f(y,x)=b(y)-d(y)-\al(y,x)\bar n(x)>0$.

For simplicity of notation, we always assume $x_0^{(n)}\prec x_1^{(n)}\prec\cdots\prec x_n^{(n)}$ with $x^{(n)}_i=x_{n_i}$ for $0\leq i\leq n$. By adding a new trait $x$ whose fitness is between  $x_j^{(n)}$ and $x_{j+1}^{(n)}$ for some $0\leq j\leq n$, we relabel new traits as following
\beq
x^{(n+1)}_0\prec x^{(n+1)}_1\prec\cdots\prec x^{(n+1)}_n\prec x^{(n+1)}_{n+1},
\eeq
where $x^{(n+1)}_i=x^{(n)}_i$ for $0\leq i\leq j$, $x^{(n+1)}_{j+1}=x$ and $x^{(n+1)}_{i}=x^{(n)}_{i-1}$ for $j+2\leq i\leq n+1$.

\item[(C2)] Competition and migration only occurs between nearest neighbors, i.e. for totally ordered traits in (C1), we have $m(x_i^{(n)},x_j^{(n)})=\al(x_i^{(n)},x_j^{(n)})\equiv 0$ for $\mid i-j\mid>1$.
\end{description}

Under above assumptions we can rewrite the generator \eqref{generator_X_epsilon,sigma_Micro} as following
\beq\label{generator_X_epsilon,sigma_Micro_nearest neighor}
\bea
 L^{K,\epsilon,\sigma} F(\nu)
    &= \int_{\X}\left[F(\nu+\frac{\delta_x}{K})-F(\nu)\right]b(x)K\nu(dx)\\
    &+\int_{\X}\left[F(\nu-\frac{\delta_x}{K})-F(\nu)\right]\left(d(x)+\int_{\X}\alpha(x,y)1_{\{x^-,\,x,\,
     x^+\}}\nu(dy)\right){K}\nu(dx)\\
    &+\epsilon\int_{\X}\int_{\X}\left[F(\nu+\frac{\de_{y}}{K}-\frac{\de_{x}}{K})-F(\nu)\right]1_{\{x^-,\,
     x^+\}}m(x,dy)K\nu(dx)\\
    &+\sigma\int_{\X}\int_{\R^d}\left[F(\nu+\frac{\de_{x+h}}{K})-F(\nu)\right]\mu(x)p(x,dh)K\nu(dx)
\eea
\eeq
where $x^-$ and $x^+$, specified by the total order relation in assumption (C1), are elements in supp$\{\nu\}\subset\X$ satisfying
\[
x^-=\sup \{y\in \textrm{supp}\{\nu\}: \bar f(y,x)<0\}
\]
and
\[
x^+=\inf \{y\in \textrm{supp}\{\nu\}: \bar f(y,x)>0\}.
\]

On the migration time scale, there are a variety of different paths to approach the equilibrium configuration by specifying different coefficients. However, the equilibrium configuration of a finite trait system is always the same up to the ordered sequence determined as in assumption (C1), and the time scale for convergence is always of order $O(\ln\frac{1}{\epsilon})$ as showed in Theorem \ref{TST Theorem on finite trait space_Micro}.

\begin{Def}\label{TST_Definition_infinite tree_micro} A Markov jump process $\{\Gamma_t: t\geq 0\}$ characterized as following is called a trait substitution tree (in short TST) with the ancestor $\Gamma_0=\bar n({x_0})\delta_{x_0}$.
\begin{description}
\item[(i)] For any nonnegative integer $l$, it jumps
    from $\Gamma^{(2l)}:=\sum_{i=0}^l\bar n(x^{(2l)}_{2i})\delta_{x^{(2l)}_{2i}}$
    to $\Gamma^{(2l+1)}$\\
    with transition rate $\bar n(x^{(2l)}_{2k})\mu(x^{(2l)}_{2k})p(x^{(2l)}_{2k},dh)$ for any $0\leq k\leq l$, where
    \begin{itemize}
    \item $\Gamma^{(2l+1)}=\sum_{i=1}^j\bar n(x^{(2l)}_{2i-1})\delta_{x^{(2l)}_{2i-1}}+\bar n(x^{(2l)}_{2k}+h)\delta_{x^{(2l)}_{2k}+h}+\sum_{i=j+1}^l\bar n(x^{(2l)}_{2i})\delta_{x^{(2l)}_{2i}}$\\
        \vspace{4mm}
        if there exists $0\leq j\leq l$ s.t.
        $x^{(2l)}_{2j}\prec x^{(2l)}_{2k}+h\prec x^{(2l)}_{2j+1}$,\\
    \item $\Gamma^{(2l+1)}=\sum_{i=1}^j\bar n(x^{(2l)}_{2i-1})\delta_{x^{(2l)}_{2i-1}}+\sum_{i=j}^l\bar n(x^{(2l)}_{2i})\delta_{x^{(2l)}_{2i}}$\\ \vspace{4mm}
        if there exists $0\leq j\leq l$ s.t.
        $x^{(2l)}_{2j-1}\prec x^{(2l)}_{2k}+h\prec x^{(2l)}_{2j}$.
    \end{itemize}
    Then, we relabel the traits according to the total order relation as in (C1):
    \beq
    x_0^{(2l+1)}\prec x_1^{(2l+1)}\prec\cdots\prec  x_{2l}^{(2l+1)}\prec x_{2l+1}^{(2l+1)},
    \eeq
    where in associate with the first case
    \begin{align*}
    &x_i^{(2l+1)}:=x_i^{(2l)} ~\textrm{for} ~0\leq i\leq 2j,\qquad x^{(2l+1)}_{2j+1}:=x^{(2l)}_{2k}+h,\\
    &x_i^{(2l+1)}:= x_{i-1}^{(2l)}~\textrm{for} ~2j+2\leq i\leq 2l+1,
    \end{align*}
    and in associate with the second case
    \begin{align*}
    &x_i^{(2l+1)}:=x_i^{(2l)} ~\textrm{for} ~0\leq i\leq 2j-1,\quad x^{(2l+1)}_{2j}:=x^{(2l)}_{2k}+h,\\
    &x_i^{(2l+1)}:= x_{i-1}^{(2l)} ~\textrm{for} ~2j+1\leq i\leq 2l+1.
    \end{align*}

\item[(ii)] For any nonnegative integer $l$, it jumps
    from $\Gamma^{(2l+1)}:=\sum_{i=1}^{l+1}\bar n(x^{(2l+1)}_{2i-1})\delta_{x^{(2l+1)}_{2i-1}}$ to $\Gamma^{(2l+2)}$
    \vspace{4mm}
    with transition rate $\bar n(x^{(2l+1)}_{2k-1})\mu(x^{(2l+1)}_{2k-1})p(x^{(2l+1)}_{2k-1},dh)$ for any $1\leq k\leq l+1$, where
    \vspace{4mm}
    \begin{itemize}
    \item $\Gamma^{(2l+2)}=\sum_{i=1}^j\bar n(x^{(2l+1)}_{2(i-1)})\delta_{x^{(2l+1)}_{2(i-1)}}+\bar n(x^{(2l+1)}_{2k-1}+h)\delta_{x^{(2l+1)}_{2k-1}+h}+\sum_{i=j+1}^{l+1}\bar n(x^{(2l+1)}_{2i-1})\delta_{x^{(2l+1)}_{2i-1}}$\\
        \vspace{4mm}
        if there exists $1\leq j\leq l+1$ s.t.
        $x^{(2l+1)}_{2j-1}\prec x^{(2l+1)}_{2k-1}+h\prec x^{(2l+1)}_{2j}$,\\
    \item  $\Gamma^{(2l+1)}=\sum_{i=1}^j\bar n(x^{(2l+1)}_{2(i-1)})\delta_{x^{(2l+1)}_{2(i-1)}}+\sum_{i=j}^{l+1}\bar n(x^{(2l+1)}_{2i-1})\delta_{x^{(2l+1)}_{2i-1}}$\\ \vspace{4mm}
        if there exists $1\leq j\leq l+1$ s.t.
        $x^{(2l+1)}_{2j-2}\prec x^{(2l+1)}_{2k-1}+h\prec x^{(2l+1)}_{2j-1}$.
    \end{itemize}
    Then, we relabel the traits according to the total order relation as in (C1):
    \beq
    x_0^{(2l+2)}\prec x_1^{(2l+2)}\prec\cdots\prec  x_{2l+1}^{(2l+2)}\prec x_{2l+2}^{(2l+2)},
    \eeq
    where in associate with the first case
    \begin{align*}
    &x_i^{(2l+2)}:=x_i^{(2l+1)} ~\textrm{for} ~0\leq i\leq 2j-1,\quad x^{(2l+2)}_{2j}:=x^{(2l+1)}_{2k-1}+h,\\
    &x_i^{(2l+2)}:= x_{i-1}^{(2l+1)}~\textrm{for} ~2j+1\leq i\leq 2l+2,
    \end{align*}
    and in associate with the second case
    \begin{align*}
    &x_i^{(2l+2)}:=x_i^{(2l+1)} ~\textrm{for} ~0\leq i\leq 2j-2,\quad x^{(2l+2)}_{2j-1}:=x^{(2l+1)}_{2k-1}+h,\\
    &x_i^{(2l+2)}:= x_{i-1}^{(2l+1)}~\textrm{for} ~2j\leq i\leq 2l+2.
    \end{align*}
\end{description}

\end{Def}
\begin{rem}
 According to the definition, the new configuration is constructed in a way that every alternative trait gets
stabilized when one ``looks down''  from the most fittest trait along the declining fitness landscape. Once a mutant is inserted in between two levels (say, $i$ and $i+1$), we relabel all the traits above from the mutant's level. However, the mutation only alters the equilibrium configuration below $i+1$th level but not above. This construction is similar to the look-down construction of Coalescent processes (see \cite{DK99}).  
\end{rem}

\begin{teor}\label{TST_Theorem_infinite trait_Micro}
Admit assumption (\textbf{A1}) and (\textbf{C}). Consider the process $\{X^{K,\epsilon,\sigma}_t, t\geq 0\}$ described by the generator \eqref{generator_X_epsilon,sigma_Micro_nearest neighor}. Suppose that $X_0^{K,\epsilon,\sigma}= \frac{N^K_0}{K}\delta_{x_0}$ and $\frac{N^K_0}{K}\to \bar n(x_0)$ in law as $K\to\infty$. In addition to the condition \eqref{rare migration condition}, suppose it also holds that
\beq\label{rare mutation condition}
\ln\frac{1}{\epsilon}\ll\frac{1}{K\sigma}\ll e^{KC} \qquad\text{for any} ~C>0.
\eeq
Then $(X^{K,\epsilon,\sigma}_{t/K\sigma})_{t\geq 0}$ converges as $K\to \infty$ to the trait substitution tree $(\Gamma_{t})_{t\geq 0}$ defined in Definition \ref{TST_Definition_infinite tree_micro} in the sense of f.d.d. on $\M_F(\X)$ equipped with the topology induced by mappings $\nu\mapsto\langle\nu,f\rangle$ with $f$ a bounded measurable function on $\X$.
\end{teor}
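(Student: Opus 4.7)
The plan is to exploit the separation of three time scales imposed by \eqref{rare migration condition} and \eqref{rare mutation condition}: the demographic scale $O(1)$, the migration--fixation scale $O(\ln(1/\epsilon))$, and the slow mutation scale $O(1/(K\sigma))$. I would argue by induction on the successive mutation events, applying Theorem \ref{TST Theorem on finite trait space_Micro} on each ``plateau'' between two consecutive mutations and piecing the resulting finite-dimensional laws together using the strong Markov property.

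First, I would observe that until the first mutation occurs, the process $X^{K,\epsilon,\sigma}$ coincides in law with the no-mutation process $X^K$ on the current finite trait support. Because the waiting time for the next mutation is of order $1/(K\sigma)$ while relaxation to the alternating equilibrium $\Gamma^{(n)}$ takes only $O(\ln(1/\epsilon))$, the separation $\ln(1/\epsilon)\ll 1/(K\sigma)$ of \eqref{rare mutation condition} guarantees that on the rescaled time $t/(K\sigma)$ the population sits at $\Gamma^{(n)}$ for the entire inter-mutation interval, up to an initial boundary layer of vanishing length and a total-variation error that vanishes by Theorem \ref{TST Theorem on finite trait space_Micro}.

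Next, I would compute the first mutation time on the slow scale. On the configuration $\Gamma^{(n)}=\sum_{i}\bar n(x^{(n)}_{a_i})\delta_{x^{(n)}_{a_i}}$ (with $a_i$ indexing the surviving alternating traits), the instantaneous total mutation intensity is $K\sigma\sum_i \mu(x^{(n)}_{a_i})\bar n(x^{(n)}_{a_i})$, so after the change of time $t\mapsto t/(K\sigma)$ mutations arrive with intensity $\sum_i \mu(x^{(n)}_{a_i})\bar n(x^{(n)}_{a_i})p(x^{(n)}_{a_i},dh)$, matching the jump rate prescribed by Definition \ref{TST_Definition_infinite tree_micro}. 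When a mutant of trait $y=x^{(n)}_{a_i}+h$ appears as a single individual $K^{-1}\delta_y$, the state after relabeling along the total order $\prec$ of (C1) is again in the regime treated by Theorem \ref{TST Theorem on finite trait space_Micro}: the residents sit at their previous equilibrium and one ``migrant'' enters at trait $y$. A direct adaptation of that theorem, applied now from the new starting configuration on the augmented trait space, gives convergence in total variation to the updated alternating equilibrium $\Gamma^{(n+1)}$ within another fixation window of length $O(\ln(1/\epsilon))$, which is again instantaneous on the mutation scale; the alternating prescription matches exactly cases (i) and (ii) of Definition \ref{TST_Definition_infinite tree_micro}. Iterating via the strong Markov property at the successive mutation epochs then yields convergence of finite-dimensional distributions to the TST process $(\Gamma_t)_{t\geq 0}$.

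The main obstacle will be quantifying the error terms in the time-scale separation. Specifically, one must show (a) that with overwhelming probability no two mutations fall within a single fixation window, which reduces to $K\sigma\cdot\ln(1/\epsilon)\to 0$, i.e.\ the first half of \eqref{rare mutation condition}; (b) that none of the surviving subpopulations of density $\bar n(x^{(n)}_{a_i})$ goes extinct by a large-deviation event during the $O(1/(K\sigma))$ waiting interval, which is where the upper bound $1/(K\sigma)\ll e^{KC}$ is invoked to dominate the exponentially small extinction probability by classical birth-and-death estimates; and (c) that the single mutant individual either invades with the probability dictated by Theorem \ref{TST Theorem on finite trait space_Micro} or dies out within an $O(\ln(1/\epsilon))$ window, so that the failed mutants do not accumulate and interfere with the next mutation event. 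Combining (a)--(c) through the strong Markov property and passing to the limit $K\to\infty$ under \eqref{rare migration condition} and \eqref{rare mutation condition} would then deliver the claimed f.d.d.\ convergence.
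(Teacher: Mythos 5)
Your proposal follows essentially the same route as the paper: an exponential mutation clock on the scale $1/(K\sigma)$ (the paper's Lemma \ref{Lemma_exp time in TST_Micro}), fixation of the rearranged alternating configuration within an $O(\ln\frac{1}{\epsilon})$ window via Theorem \ref{TST Theorem on finite trait space_Micro} (the paper's Lemma \ref{Lemma_new config. TST_Micro}, which handles the post-mutation rearrangement by treating the mutant and its unfit neighbor as an isolated pair and letting the mass occupation flip leftward from the insertion point), and the time-scale separation $\ln\frac{1}{\epsilon}\ll\frac{1}{K\sigma}\ll e^{KC}$ exploited exactly as you describe in (a) and (b), with the whole argument glued by the strong Markov property as in Champagnat's TSS proof. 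The only point where you are slightly at odds with the paper is item (c): the paper's TST transition rates carry no invasion-probability factor $[\bar f]_+/b$ and its Lemma \ref{Lemma_new config. TST_Micro} asserts fixation of $\Gamma^{(2l+1)}$ with probability tending to one from a single mutant individual, so the paper does not treat the "failed mutant" branch you anticipate.
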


 \begin{figure}[hbtp]
 \centering
\includegraphics[width=220pt]{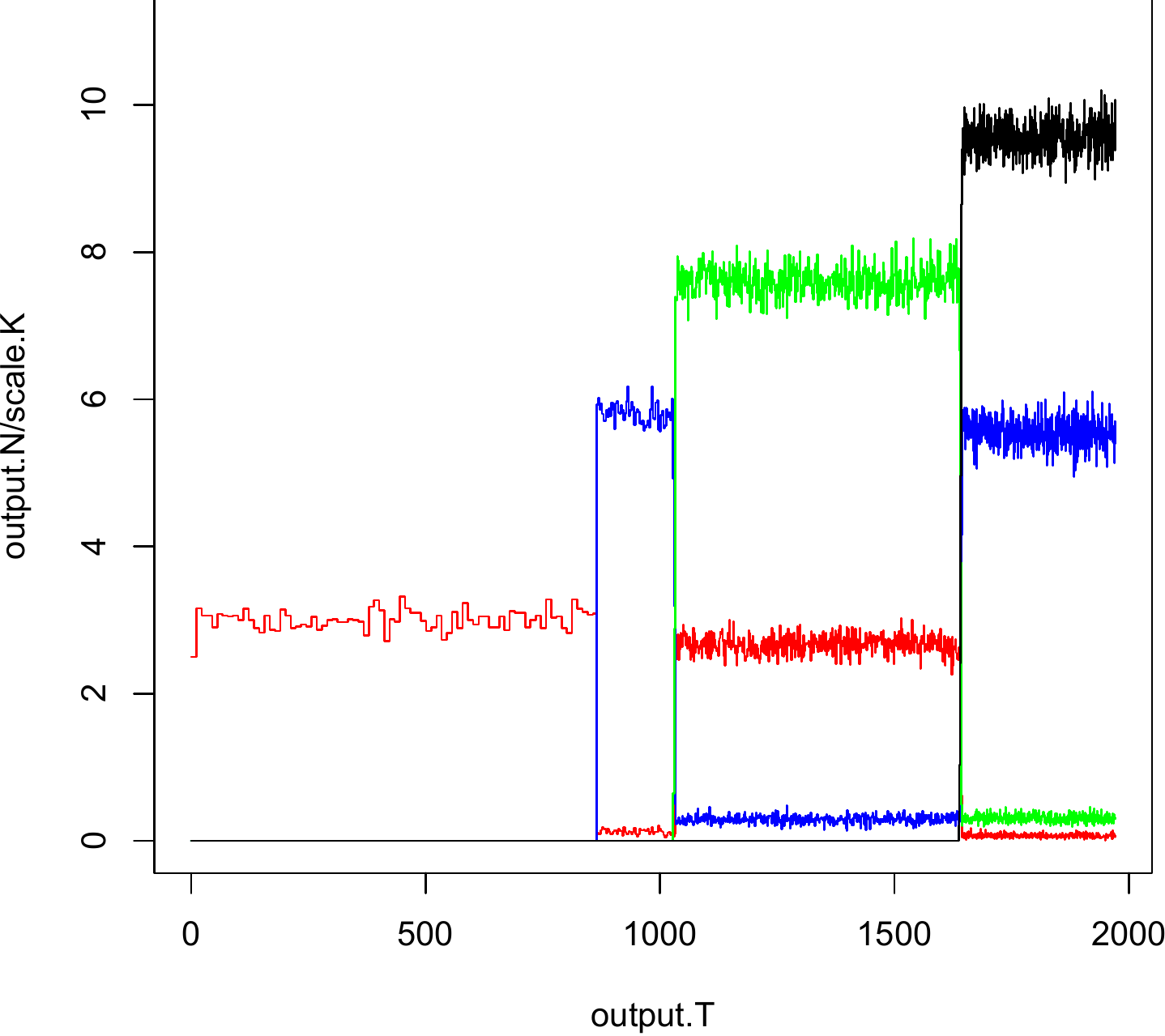}
\includegraphics[width=220pt]{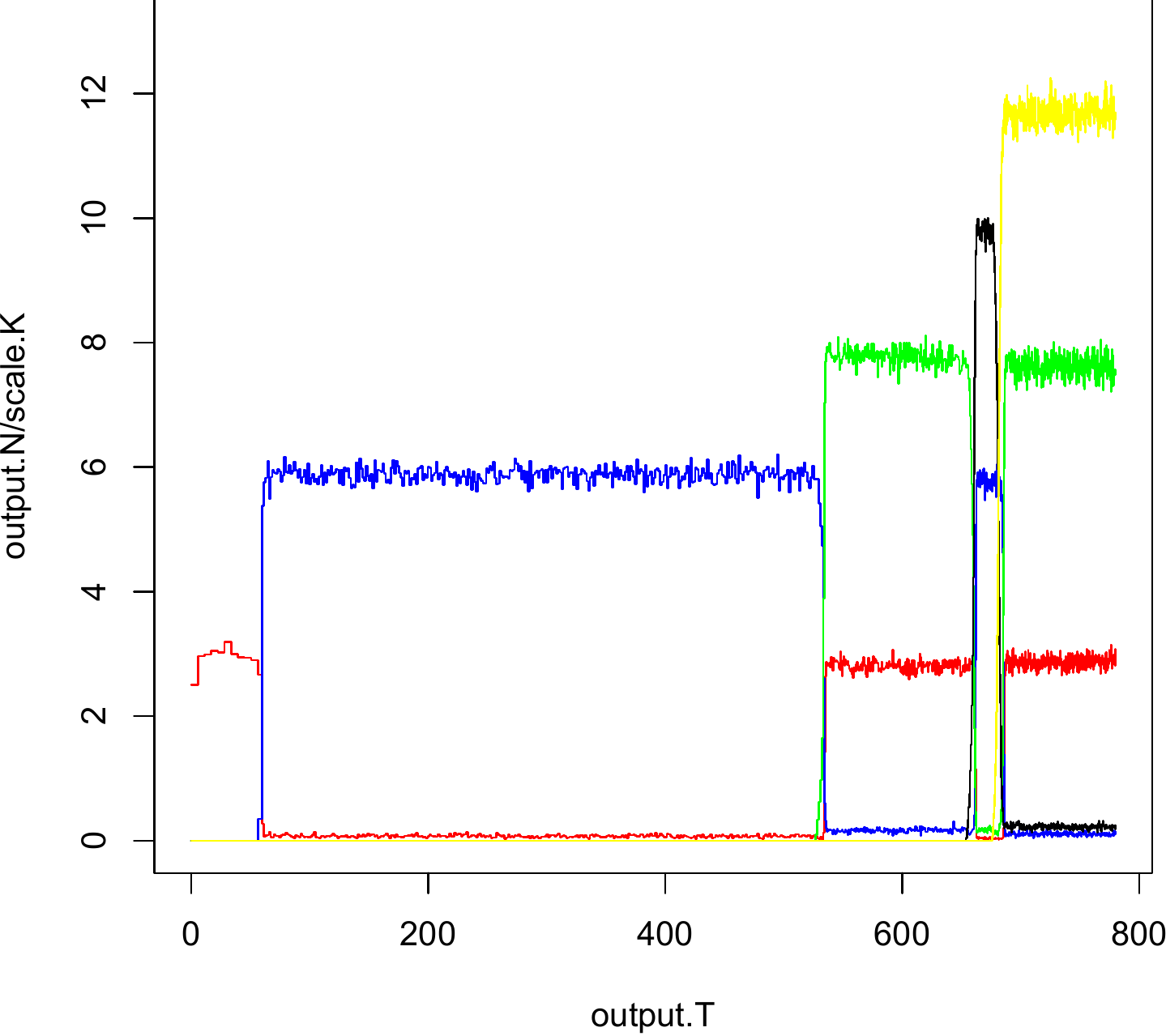}
\caption{\footnotesize Simulations of a trait substitution tree on the mutation time scale arising in Theorem \ref{TST_Theorem_infinite trait_Micro} on four- and five-type trait space. }
\label{Figure TST_simulation_Micro with mutation}
 \end{figure}

\begin{rem}
\begin{itemize}
\item
There are two time scales for the individual-based population, which can be observed from  Theorem \ref{TST Theorem on finite trait space_Micro} and the generator \eqref{generator_X_epsilon,sigma_Micro_nearest neighor}. One is the fixation time scale of order $\ln\frac{1}{\epsilon}$ while the other one is the mutation time scale of order $\frac{1}{K\sigma}$, which are constrained on LHS of the inequality \eqref{rare mutation condition}. By adopting the time scales separation technique used in \cite{Cha06}, we can get a nice limiting structure-TST in the large population limit. The RHS of the inequality \eqref{rare mutation condition} is used to guarantee that system can not drift away from the TST equilibrium configuration on the mutation time scale (see Freidlin and Wentzell \cite{FW84}).

\item
 As it is showed in Figure \ref{Figure TST_simulation_Micro with mutation}, we simulate the trait substitution tree processes by introducing a mutation mechanism. Note that the simulation shows a special case where the population always reproduces a mutant which is more fitter than any of already existing traits. The birth rate of red-colored population is $3$, while the blue one, the green one, the black one and the yellow one have birth rates $6,\,8,\,10,\,12$ resp.. Their death rates are constant $0$. We take $\epsilon=K^{-0.8}$ and $\sigma=K^{-1.5}$, where initial scaling parameter $K=400$.
 On a longer mutation time scale, the fixation process due to migration is not visible any more. However, if we zoom into the infinitesimal fixation period, pictures as in Figure \ref{Figure TST_simulation_Micro} will emerge.

\end{itemize}
\end{rem}


\section{Application to a diploid population}\label{section 3.5}
We begin by introducing some terminology from population genetics in
the same setting as \cite[Chapter 10.1]{EK86}.

In this section we restrict our attention on one-locus diploid populations,
where the chromosomes occur in the form of homologous pairs. More
precisely, an individual's genetic makeup with respect to a
particular locus, as indicated by the unordered pair of alleles
situated there (one on each chromosome), is referred to be its
genotype. Therefore, if there are $n$ possible alleles, i.e. finite allele space
$\mathcal{A}^{n}:=\{A_1,\ldots, A_n\}$, at a given locus, they can fuse
$h(n):=\left(\begin{array}{c} n\\2\end{array}\right)+n=n(n + 1)/2$ possible genotypes. We denote the
genotype space by $G^n:=\{(A_i, A_j), 1\leq i\leq j\leq n\}$.
Without loss of generality one can take the total allele space
$\mathcal{A}$ to be a compact subset of $\R^d$, or simply a continuum
interval $[0,1]$. Denote the total genotype space by $G$. To keep
consistent with notations in Section \ref{section 3.2}, we can endow
quantitative trait value for every different genotype by the
following measurable mapping $\Phi: G^n\mapsto \X$ such that
\beq\label{mapping gene and trait}
\Phi((A_i, A_j))=x_l \qquad\textrm{for}~ 1\leq l\leq h(n), ~1\leq
i\leq j\leq n
\eeq
where $\Phi$ is symmetric for $i,\,j$, and traits
$(x_l)_{1\leq l\leq h(n)}$ are ordered according to their relative
fitness as in assumption (C1). In accordance with notations in
Section \ref{section 3.2}, we list the following parameters for
diploid populations. For any $g\in G$,
\begin{itemize}
  \item denote by $B(g):=b(\Phi(g))$ the birth rate from an individual of genotype $g$.
  \item denote by $D(g):=d(\Phi(g))$ the death rate of an individual of genotype $g$ because of ``aging''.
  \item denote by $\hat\alpha (g_1, g_2):=\alpha(\Phi(g_1), \Phi(g_2))$ the competition kernel felt by an individual of genotype $g_1$ from another of genotype $g_2$.
  We restrict the competition acting on individuals of the same genotype and within the nearest-neighbors (ordered in terms of their fitness values).
  \item denote by $\hat m(g_1, dg_2)$ the replacement law of an individual of genotype $g_1$ by another $g_2$
  when one of allele pair of the mother individual $g_1$ undergoes fusion with another gamete allele from any father to form a new type
  $g_2$. Thus, $g_2$ is chosen to be any genotype fused in a way that one allele is from $g_1$ and the other one can be anyone from the whole supporting allele space.
  \item denote by $\hat\mu(g):=\mu(\Phi(g))$ the mutation rate of an individual of genotype $g$.
  \item denote by $\hat p(A,da)$ the law of mutant variation between a mutant and its resident allele type $A$.
\end{itemize}

We also limit our discussion to monoecious populations, those in
which each individual can act as either a male or a female parent.
The reproductive process can be briefly described as follows.

\textbf{Sexual reproduction}. Sexual reproduction is a creation of a new organism 
by combining genetic gametes from two parental individuals.
Suppose each individual can produce a large amount
of germ cells, cells of the same genotype. Some germ cells split
into two gametes (each carries one chromosome information from the
homologous pair in the original cell), and any two gametes from
different individuals form another genotype which is different from
the parents. The above procedure is called meiosis and fusion of gametes. Note that
sexual reproduction can be realized in a manner of gamete replacement of one of the mother individual's gamete. 
For instance, the effect of reprodcution from a (mother) individual of genotype
$(A_i, A_j)$  by fusing with any gamete $A_k$ generated from any
other (father) individual is a replacement of the individual $(A_i,
A_j)$ by a new individual of genotype either $(A_i, A_k)$ or $(A_k, A_j)$.
We assume that each father individual can generate enough amount of
germ cells to provide gametes for the replacement procedure of
mother individuals. The size of offspring individuals resulted from
sexual reproduction only depends on the size of replaced mother
individuals. Based on the above simplicity, one can
think of sexual reproduction as a spatial migration of an individual from a genotypic
site $(A_i, A_j)$ (say $g_1$) to another genotypic site $(A_i, A_k)$
or $(A_k, A_j)$ (say $g_2$) with migration kernel $\hat m(g_1, dg_2)$,
and the migration rate only depends on the size density of the
(mother) genotype $(A_i, A_j)$.
 Then we can employ the results on the spatial migration model in
Section \ref{section 3.3} and Section \ref{section 3.4}, which
follows later. Here the sexual reproduction is based on allelic level
but is represented on genotypic level. 

\textbf{Clonal birth and death}. Besides the sexual reproduction,
meanwhile, there are also reproductive birth which does not apply
fusion of gametes from two parents. Instead, an offspring carries a clonal copy of the
parent's genotype $g$ with birth rate $B(g)$. This kind of local
birth is carried out as defined in Section \ref{section 3.2}.
Similarly as before, the death of an individual is governed by a
quadratic form of its own size density and the density of its
nearest-neighbors. These two events are based on genotypic level.

\textbf{Mutation}. Mutation occurs due to a change of one of alleles
in a genotype, and the allele space is enlarged by one. Suppose
there are $n$ different allele types before a mutation event. After mutation, the genotype space is enlarged to
be of size $h(n+1)$ from $h(n)$. The mutation event is governed by a mutation
kernel $\hat p(A, da)$ on the allele space $\mathcal{A}$. The mutation event is based on allelic level. 

Notice that all the above events are density dependent, which means
their transition rates are proportional to the local density of the
population size. We consider it as finite measure-valued processes
$(\mathfrak{g}_t)_{t\geq 0}$ on the genotype space $G$. Denote by
$\M_F(G)$ the totality of finite measures on $G$. Now we write down
the infinitesimal generator of the diploid model with $1/K$-scaled
weight for a proper testing function $F$

\beq\bea
 \mathcal{L}^K F(\mathfrak{g})
    &= \int_{G}\left[F(\mathfrak{g}+\frac{\delta_g}{K})-F(\mathfrak{g})\right]B(g)K\mathfrak{g}(dg)\\
    &+\int_{G}\left[F(\g-\frac{\delta_g}{K})-F(\g)\right]\left(D(g)+\int_{G}\hat\alpha(g,\tilde g)\g(d\tilde g)\right){K}\g(dg)\\
    &+\epsilon\int_{G}\int_{G}\left[F(\g+\frac{\de_{g_2}}{K}-\frac{\de_{g_1}}{K})-F(\g)\right]\hat m(g_1,dg_2)1_{\{g_2: g_1\cap g_2\neq \varnothing
     ; g_2\in\textrm{supp}\{\mathfrak{g}\}\}}K\g(dg_1)\\
    &+\sigma\int_{G}\int_{\mathcal{A}}\left[F(\g+\frac{\de_{(A_1+a,A_2)}}{K})-F(\g)\right]\de_{g}((A_1,A_2))\hat\mu(g)\hat p(A_1,da)K\g(dg)
\eea \eeq

where $g_1\cap g_2\neq \varnothing$ means that genotype $g_1$ and
$g_2$ share at least one same allelic type in their allele pairs.

\begin{Def} A Markov jump process $(\Delta_t)_{t\geq 0}$ with initial
$\Delta_0=n_0\delta_{(A_1,A_1)}$ for some $A_1\in\mathcal{A}$ is called a genotype substitution
tree if it satisfies the follows. For any given $n\in\N$ and
$\mathcal{A}^n=\{A_1,A_2,\ldots,A_n\}$, let
$\Gamma^{(h(n))}$ (defined in Definition \ref{TST_Definition_infinite tree_micro}) 
be the equilibrium configuration for
traits distribution determined by genotypes in $G^n=\{(A_i, A_j): 1\leq
i\leq j\leq n\}$ and \eqref{mapping gene and trait}. Let $\Delta^{(n)}$ be $\Gamma^{(h(n))}$'s equivalent form
as the corresponding equilibrium on the genotype space.
Suppose that the number $h(n)$ is even, the
transition rate for the Markov jump process $(\Delta_.)$ from $\Delta^{(n)}$
to $\Delta^{(n+1)}$ due to mutation variation of magnitude $a$ on an
allele $A_j$ $(1\leq j\leq n)$ is
\beq
\sum\limits_{i=1}^{h(n)/2}\bar
n(x_{2i})\hat\mu(x_{2i})(1+\delta_j(k))1_{\{x_{2i}=\Phi((A_j, A_k)), \,\exists\,
1\leq k\leq n\}}\hat p(A_j, da)
\eeq 
where $(x_i)_{1\leq i\leq h(n)}$ are
indexed according to a fitness increasing order of $\{\Phi((A_i, A_j)):
1\leq i\leq j\leq n\}$.
\end{Def}

Note that $(1+\delta_j(k))$ appeared in above rate is used to distinguish cases
whether the mutant allele is from a homozygous genotype or not. 

\begin{teor}
Consider a sequence of processes $\{(\g^K_t)_{t\geq 0}, K\in\N\}$
defined by above generator with initial condition
$\g^K_0=n_K\de_{(A_1,A_1)}$, where $n_K$ converges to $n_0$ as
$K\to\infty$. If mutation and migration rates satisfy constraints
\eqref{rare migration condition} and \eqref{rare mutation
condition}, as $K\to \infty$,
$\left(\g^K_{\frac{t}{K\sigma}}\right)$ converges to a genotype
substitution tree process $(\Delta_t)$ in the sense of f.d.d. on
$\M_F(G)$ equipped with the topology induced by mappings
$\g\mapsto\langle\g,f(\Phi(\cdot))\rangle$ with $f$ a bounded
measurable function on $\X$.
\end{teor}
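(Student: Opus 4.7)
The plan is to reduce this convergence statement to the already-established Theorem \ref{TST_Theorem_infinite trait_Micro} via the phenotype projection $\Phi:G\to\X$. First I would push the diploid-population process forward through $\Phi$ by defining the trait-level process $X^{K,\epsilon,\sigma}_t:=\Phi_*\g^K_t$, characterized by $\langle X^{K,\epsilon,\sigma}_t,f\rangle = \langle \g^K_t, f\circ\Phi\rangle$, and verify that its infinitesimal generator coincides term-by-term with \eqref{generator_X_epsilon,sigma_Micro_nearest neighor}. The identifications $B=b\circ\Phi$, $D=d\circ\Phi$, and $\hat\alpha=\alpha\circ(\Phi\times\Phi)$ take care of the clonal birth, death, and quadratic competition. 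Sexual reproduction, viewed as genotype-to-genotype migration $\hat m(g_1,\cdot)$ restricted by the overlap indicator $1_{\{g_1\cap g_2\neq\varnothing\}}$, projects to nearest-neighbor trait migration once one invokes the ordering and nearest-neighbor assumptions (C1)--(C2) on $\X$. The allelic mutation term requires a small bookkeeping check: at the individual level, mutation $A_j\to A_j+h$ inside a mother of genotype $(A_j,A_k)$ produces exactly one new genotype $(A_j+h,A_k)$ and hence one new trait, so it matches the mutation term in \eqref{generator_X_epsilon,sigma_Micro_nearest neighor} with rate density $\mu(x)p(x,dh)$.

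Once the generator of $X^{K,\epsilon,\sigma}$ is thus identified, the hypotheses \eqref{rare migration condition} and \eqref{rare mutation condition} are exactly those required by Theorem \ref{TST_Theorem_infinite trait_Micro}, from which I conclude that $(X^{K,\epsilon,\sigma}_{t/K\sigma})$ converges in f.d.d.\ to the trait substitution tree $(\Gamma_t)$. The final step is to read this limit back as the genotype substitution tree $(\Delta_t)$. By assumption (C1) all supporting traits of the equilibrium configuration $\Gamma^{(h(n))}$ are distinct, so each is the image under $\Phi$ of a uniquely determined genotype in $G^n$; this makes the assignment $\Gamma^{(h(n))}\mapsto\Delta^{(n)}$ well defined, and continuous with respect to the topology induced by mappings $\g\mapsto\langle\g,f\circ\Phi\rangle$ stated in the theorem. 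Convergence of $(\g^K_{t/K\sigma})$ to $(\Delta_t)$ in the sense of f.d.d.\ in this topology therefore follows from convergence of its $\Phi$-image.

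It remains to recover the combinatorial factor $(1+\delta_j(k))$. The limiting mutation rate attached in $\Gamma_t$ to a supported trait $x_{2i}$ is $\bar n(x_{2i})\mu(x_{2i})p(x_{2i},dh)$. Writing $x_{2i}=\Phi((A_j,A_k))$ and re-indexing the mutation event by which \emph{allele} of the mother undergoes the variation $h$: if $k\neq j$, exactly one of the two alleles of the mother is an $A_j$ and the allelic rate equals the trait rate; if $k=j$, both identical alleles of the homozygous mother can independently host the mutation, doubling the allelic mutation rate. Summing the contributions of all traits $x_{2i}$ of $\Gamma^{(h(n))}$ whose genotype involves $A_j$ reproduces exactly the transition rate $\sum_i \bar n(x_{2i})\hat\mu(x_{2i})(1+\delta_j(k))1_{\{x_{2i}=\Phi((A_j,A_k))\}}\hat p(A_j,da)$ appearing in the definition of the genotype substitution tree.

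The main obstacle I expect is not algebraic but probabilistic: justifying that on the fast migration window $\ln(1/\epsilon)\ll 1/(K\sigma)$, a single freshly created individual of genotype $(A_j+h,A_k)$ is reshuffled by sexual reproduction into every compatible pair that the new allele $A_j+h$ can form with previously existing alleles, so that the post-mutation configuration is $\Delta^{(n+1)}$ rather than merely something supported on the two genotypes present immediately after the mutation event. This is the genuine new content beyond the relabeling and requires applying Theorem \ref{TST Theorem on finite trait space_Micro} uniformly in the class of initial data consisting of the previous equilibrium plus a single $1/K$-mass migrant; the branching-process comparison underlying that theorem should carry over, but care is needed because the enlarged genotype graph has richer nearest-neighbor structure than the purely one-dimensional trait chain it is mapped to, and one must check that the induced trait dynamics still obey the nearest-neighbor assumption (C2) at every intermediate configuration that arises along the reshuffling path.
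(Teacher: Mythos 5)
Your proposal follows exactly the route the paper takes: the paper's entire proof of this statement is the single sentence that it ``can be obtained as a corollary of Theorem \ref{TST_Theorem_infinite trait_Micro}'', i.e.\ push $\g^K$ forward through $\Phi$, identify the resulting generator with \eqref{generator_X_epsilon,sigma_Micro_nearest neighor}, and read the limiting trait substitution tree back as the genotype substitution tree, with the factor $(1+\delta_j(k))$ accounting for the two indistinguishable mutable alleles of a homozygous mother. You in fact supply considerably more detail than the paper does, and the obstacle you flag at the end --- that the genotype migration graph induced by $\hat m$ and the overlap indicator is richer than the one-dimensional nearest-neighbor trait chain, so that assumption (C2) must be verified for the projected dynamics at every intermediate configuration --- is a genuine point that the paper leaves entirely unaddressed.
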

The above theorem can be obtained as a corollary of Theorem \ref{TST_Theorem_infinite trait_Micro}.


\section{Outline of proofs}\label{section 3.6}

 In order to illustrate the basis idea of proofs, we start with a three-trait toy model.
But notice that our analysis is not reduced only to the three-trait case. All the machinery is still available for any finite-trait space, 
which will be shown later. However, the explicit proofs are more difficult to write down without some restrictive conditions. 
That is why we impose assumption (\textbf{B}3) in Theorem \ref{TST Theorem on finite trait space_Micro}.

\begin{prop}\label{Proposition_3type_Micro}
Admit the same condition as in Theorem \ref{TST Theorem on finite trait space_Micro}.
Consider a sequence of processes on a trait space $\X=\{x_0, x_1, x_2\}$. Then, there exists a constant $\bar{t}_2>0$, such that for any $t>\bar{t}_2$
\beq
\lim\limits_{K\to\infty} X^K_{t\ln\frac{1}{\epsilon}}\stackrel{\text{d}}{=}\Gamma^{(2)}
\eeq
under the total variation norm.
\end{prop}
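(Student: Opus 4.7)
The plan is to decompose the evolution on the $\ln(1/\epsilon)$ time scale into a few successive phases driven by nearest-neighbor migration between the sites $x_0 \prec x_1 \prec x_2$, and in each phase to compare the rescaled process $X^K$ with an explicit system of ODEs for the densities $n_i^K(t) := X_t^K(\{x_i\})$. The comparison will use a Fournier--M\'el\'eard large-population bound when the densities are macroscopic, and a branching-with-immigration coupling when they are not yet.

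First, on the natural time scale $O(1)$ the mass at $x_0$ relaxes to $\bar n(x_0)$ by the standard logistic LLN, with an $O(\epsilon)$ leakage to $x_1$ through migration. Once $n_0^K \approx \bar n(x_0)$, the $x_1$ density obeys to leading order
\begin{equation*}
\dot n_1 \;=\; \bar f(x_1,x_0)\, n_1 \;+\; \epsilon\, m(x_0,x_1)\,\bar n(x_0),
\end{equation*}
whose solution from $n_1(0)=0$ grows like $\epsilon\,e^{\bar f(x_1,x_0) t}$, and therefore becomes macroscopic at time $t_1^{\star}\ln(1/\epsilon)+O(1)$ with $t_1^{\star} := 1/\bar f(x_1,x_0)$. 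By the same mechanism $n_1^K$ then feeds $n_2^K$, which reaches $\bar n(x_2)$ after a further interval $t_2^{\star}\ln(1/\epsilon)$ with $t_2^{\star} := 1/\bar f(x_2,x_1)$; during this second invasion $n_0^K$ is transiently pushed down by nearest-neighbor competition with $x_1$ since $\bar f(x_0,x_1)<0$. Finally, once $n_2^K \approx \bar n(x_2)$, the invasion fitness of $x_1$ is $\bar f(x_1,x_2)<0$, so $n_1^K$ decays exponentially at rate $|\bar f(x_1,x_2)|$; because $x_0$ and $x_2$ are non-neighbors, $x_0$ is released from the pressure exerted by $x_1$ and the logistic dynamics pull it back up to $\bar n(x_0)$. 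The constant $\bar t_2$ is thus essentially $t_1^{\star}+t_2^{\star}+1/|\bar f(x_1,x_2)|$ plus an $O(1)$ buffer guaranteeing that the residual mass at $x_1$ lies below any chosen threshold.

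The main technical difficulty is justifying the ODE approximation for $n_1^K$ (and symmetrically $n_2^K$) during the exponential growth phase, where its size ranges from $O(K\epsilon)$ up to $\Theta(K)$, a regime in which the standard Fournier--M\'el\'eard LLN is not sharp. The strategy is to sandwich $K n_1^K$ between a sub- and a super-critical branching process with immigration, both with immigration intensity of order $K\epsilon\, m(x_0,x_1)\,\bar n(x_0)$ and offspring law matching the linearized fitness $\bar f(x_1,x_0)$. The constraint $K\epsilon\gg 1$ is essential here: it guarantees that the immigration stream is itself of LLN type with no random founder effect (in sharp contrast to the TSS regime $K\epsilon\ll 1/\ln K$), so the branching total concentrates on the logarithmic scale at the deterministic solution. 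Splicing this branching approximation with the Fournier--M\'el\'eard bound above a small fixed threshold, and treating the two subsequent phases by standard ODE comparison on the macroscopic level, will yield total variation convergence $X^K_{t\ln(1/\epsilon)}\to \Gamma^{(2)}$ in law for every $t>\bar t_2$.
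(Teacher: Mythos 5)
Your phase decomposition and the LLN/ODE comparison for the two invasion phases match the paper's argument, and the branching-with-immigration coupling for the mesoscopic regime is a reasonable substitute for the paper's device of applying the law of large numbers directly to the rescaled process $N^{K,1}_t/(K\epsilon)$ (which works precisely because $K\epsilon\to\infty$). However, your treatment of the third phase contains a genuine gap. During the invasion of $x_2$, which lasts a time $\frac{1}{\bar f(x_2,x_1)}\ln\frac{1}{\epsilon}$, the density at $x_0$ decays at rate $\bar f(x_0,x_1)<0$ and is therefore driven down to a level of order $\epsilon^{c_1}$ with $c_1=\frac{|\bar f(x_0,x_1)|}{\bar f(x_2,x_1)}\wedge 1$ (the floor at $\epsilon$, coming from back-migration out of $x_1$, has to be established separately --- it is what guarantees, together with $K\epsilon\gg 1$, that $x_0$ does not go extinct and can recover at all). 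Consequently the recovery of $x_0$ is itself a $\Theta(\ln\frac{1}{\epsilon})$ phase, of duration $\frac{c_1}{b(x_0)-d(x_0)}\ln\frac{1}{\epsilon}$, not the $O(1)$ relaxation your sketch suggests; your third summand $1/|\bar f(x_1,x_2)|$ tracks the decay of $x_1$ rather than the climb of $x_0$, and since there is no general inequality between these two quantities, your proposed $\bar t_2$ can be strictly too small, in which case $X^K_{t\ln(1/\epsilon)}$ would still have $\langle X^K,1_{\{x_0\}}\rangle=o(1)$ for $t$ just above it. The paper's constant is $\bar t_2=\frac{1}{\bar f(x_1,x_0)}+\frac{1}{\bar f(x_2,x_1)}+\frac{c_1}{b(x_0)-d(x_0)}$.

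A second, smaller omission: you never use assumption (\textbf{B}3), but it is needed. Already at times of order $1$ the site $x_2$ receives a seed of mass $O(\epsilon^2)$ by double migration $x_0\to x_1\to x_2$, and this seed grows essentially competition-free at rate $b(x_2)-d(x_2)$, offering an alternative route to macroscopic density in time $\frac{2}{b(x_2)-d(x_2)}\ln\frac{1}{\epsilon}$. Assumption (\textbf{B}3), i.e.\ $\frac{2}{b(x_2)-d(x_2)}\geq\frac{1}{\bar f(x_1,x_0)}+\frac{1}{\bar f(x_2,x_1)}$, is exactly what rules out this premature takeover and justifies your implicit claim that $x_2$ becomes macroscopic only after the sequential invasion through $x_1$. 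Without addressing this competing growth mechanism the timing of phase two is not justified.
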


\begin{proof}(see Figure \ref{Figure_Phase3type_Micro}).

Let $\xi_t^K(x_0):=\frac{N_t^K}{K}$ and $\xi_t^K(x_i):=\frac{N_t^{K,i}}{K}=\langle X_t^K, 1_{\{x_i\}}\rangle$ for $i=1,2$.

 \begin{figure}[hbtp]
 \centering
 \def\svgwidth{400pt}
 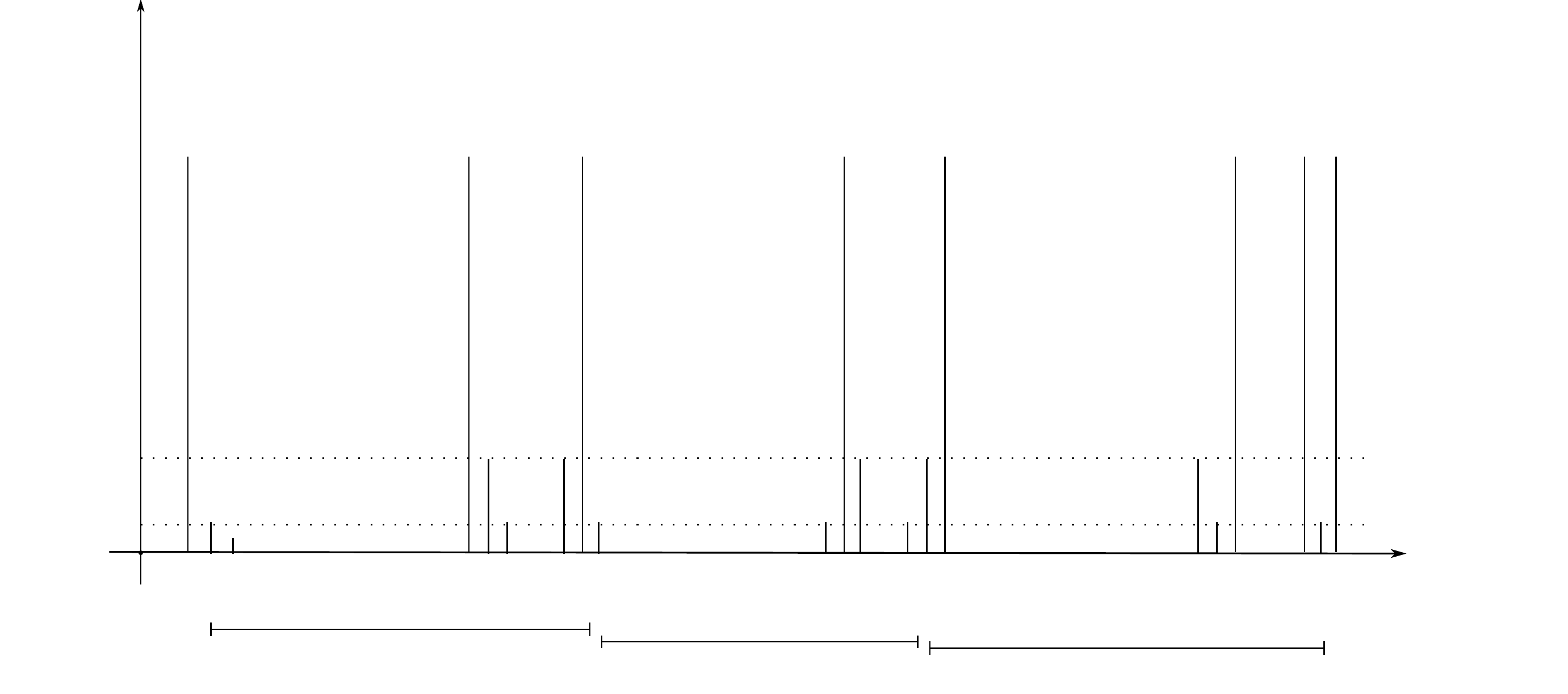
 \caption{{\footnotesize Phase evolution of mass bars in early time window on the three-trait site space}}.
 \label{Figure_Phase3type_Micro}
 \end{figure}

\textbf{Step 1}. Firstly, consider the emergence and growth of population at trait site $x_1$. Set $S_1^{\epsilon}=\inf\{t>0: \xi_t^K(x_1)\geq \epsilon\}$. Thanks to $\frac{N_0^K}{K}\to n_0>0$ in law as $K\to\infty$ and by applying the law of large numbers of random processes (see Chap.11, Ethier and Kurtz 1986), one obtains from the last term in generator \eqref{generator_X_epsilon} that, for any $\de>0,\,T>0$,
\[
\lim\limits_{K\to\infty}\PP\left(\sup\limits_{0\leq t\leq T}\left|\frac{\xi_t^K(x_1)}{\epsilon}-n_t(x_1)\right|<\de\right)=1
\]
where $n_t(x_1)$ is governed by equation $\dot{n}(x_1)= m(x_0,x_1)n_0$ with initial $n_0(x_1)=0$. Therefore,
\beq\label{linear growth before epsilon threshold}
\lim\limits_{K\to\infty}\PP\left(\frac{1}{m(x_0,x_1)n_0}-\de<S_1^{\epsilon}<\frac{1}{m(x_0,x_1)n_0}+\de\right)=1,
\eeq
that is, $S_1^{\epsilon}$ is of order 1.

For any $\eta>0$, set $S_1^{\eta}=\inf\{t: t>S_1^{\epsilon},\,\xi_t^K(x_1)\geq \eta\}$. Consider a sequence of rescaled processes $\left(\frac{N_t^{K,1}}{K\epsilon}\right)_{t\geq S_1^{\epsilon}}$ with $\frac{N_{S_1^{\epsilon}}^{K,1}}{K\epsilon}=\frac{\xi_{S_1^{\epsilon}}^K(x_1)}{\epsilon}\to 1$ as $K\to\infty$. As before, by law of large numbers of random processes (see Chap.11 Ethier and Kurtz 1986), one obtains, for any $\de>0, \,T>0$,
\beq\label{lln before eta threshold}
\lim\limits_{K\to\infty}\PP\left(\sup\limits_{0\leq t\leq T}\left|\frac{N_t^{K,1}}{K\epsilon}-m_t\right|<\de\right)=1,
\eeq
where $m_t$ is governed by equation $\dot{m}=\bar{f}(x_1,x_0)m=\left(b(x_1)-d(x_1)-\al(x_1,x_0)\bar{n}(x_0)\right)m$ with $m_0=1$.

Set $T_1^{\eta/\epsilon}=\inf\{t-S_1^{\epsilon}: t>S_1^{\epsilon},\,\frac{N_t^{K,1}}{K\epsilon}\geq \eta/\epsilon\}$, and $t_1^{\eta/\epsilon}=\inf\{t>0: m_t\geq \eta/\epsilon\}$.
Then, for any $\de>0$, there exists $\de^{'}>0$ such that
\beq\label{time estimate before eta threshold}
\bea
\lim\limits_{K\to\infty}&\PP\left(\left(\frac{1}{\bar{f}(x_1,x_0)}-\de\right)\ln\frac{1}{\epsilon}< S_1^\eta-S_1^{\epsilon}<\left(\frac{1}{\bar{f}(x_1,x_0)}+\de\right)\ln\frac{1}{\epsilon}\right)\\
=\lim\limits_{K\to\infty}&\PP\left(\left(\frac{1}{\bar{f}(x_1,x_0)}-\de\right)\ln\frac{1}{\epsilon}< T_1^{\eta/\epsilon}<\left(\frac{1}{\bar{f}(x_1,x_0)}+\de\right)\ln\frac{1}{\epsilon}\right)\\
=\lim\limits_{K\to\infty}&\PP\Bigg(\left(\frac{1}{\bar{f}(x_1,x_0)}-\frac{\de}{2}\right)\ln\frac{1}{\epsilon}< t_1^{\eta/\epsilon}<\left(\frac{1}{\bar{f}(x_1,x_0)}+\frac{\de}{2}\right)\ln\frac{1}{\epsilon},\\&\sup\limits_{0\leq t\leq t_1^{\eta/\epsilon}}\mid \frac{N_t^{K,1}}{K\epsilon}-m_t\mid<\de^{'}\Bigg)\\
=&1
\eea
\eeq
where the last equal sign is due to \eqref{lln before eta threshold}.

After population of trait $x_1$ reaches some $\eta$ threshold, the dynamics $\left(\xi_t^K(x_0),\xi_t^K(x_1)\right)$ can be approximated by the solution of a two-dimensional Lotka-Volterra equations. Then, it takes time of order 1 (mark this time coordinator by $\widetilde S_1^{\eta}$) for the two subpopulations switching their mass distribution and gets attracted into $\eta-$neighborhood of the stable equilibrium $\left(0,\bar{n}(x_1)\right)$.

\textbf{Step 2.} Now consider the emerging and growth of population $\xi_t^K(x_2):=\langle X_t^K, 1_{\{x_2\}}\rangle$ at trait site $x_2$. Set $S_2^{\epsilon}=\inf\{t: t>\widetilde{S}_1^{\eta}, \,\xi_t^K(x_2)\geq \epsilon\}$.
Similarly as is done for $S_1^{\epsilon}$ in \eqref{linear growth before epsilon threshold}, one can get that $\lim\limits_{K\to\infty}\PP(S_2^{\epsilon}-\widetilde{S}_1^{\eta}=O(1))=1$. On a longer time scale, we will not distinguish $S_2^{\epsilon}$ from $\widetilde{S}_1^{\eta}$.

Set $S_2^{\eta}=\inf\{t: t>S_2^{\epsilon},\, \xi_t^K(x_2)\geq \eta\}$. One follows the same procedure to derive \eqref{time estimate before eta threshold} and asserts that for any $\de>0$,
\beq\label{time estimate before second eta threshold}
\lim\limits_{K\to\infty}\PP\left(\left(\frac{1}{\bar{f}(x_2,x_1)}-\de\right)\ln\frac{1}{\epsilon}< S_2^\eta-\widetilde S_1^{\eta}<(\frac{1}{\bar{f}(x_2,x_1)}+\de)\ln\frac{1}{\epsilon}\right)
=1.
\eeq
Note that assumption (\textbf{B}3) $\frac{2}{b(x_2)-d(x_2)}\geq \frac{1}{\bar f(x_1,x_0)}+\frac{1}{\bar f(x_2,x_1)}$ guarantees that $\xi^K_t(x_2)$ can not grow so fast in exponential rate $b(x_2)-d(x_2)$ such that it reaches some $\eta$-level before $S_2^{\eta}$.

During time period $(\widetilde S_1^{\eta}, S_2^{\eta})$, population at site $x_0$, on one hand, decreases due to the competition from more fitter trait $x_1$. On the other hand, it can not go below $\epsilon$ level due to the successive migration in a portion of $\epsilon$ from site $x_1$. More precisely, by neglecting migrant contribution,
$\xi_t^K(x_0)$ converges $n_t(x_0)$ in probability as $K$ tends to $\infty$, where
\beq
\dot{n}_t(x_0)=\left(b(x_0)-d(x_0)-\al(x_0,x_1)\bar{n}(x_1)\right) n_t(x_0)=\bar{f}(x_0,x_1)n_t(x_0)
\eeq
with $n_0(x_0)=\eta$. Let $\Delta S_2^\eta=S_2^\eta-\widetilde S_1^{\eta}$. Then, for any $\de>0$,
\beq
\bea
&\lim\limits_{K\to\infty}\PP\left(\xi_{S_2^{\eta}}^K(x_0)\in(n_{\Delta
  S_2^{\eta}}(x_0)-\de,n_{\Delta S_2^{\eta}}(x_0)+\de)\right)\\
&=\lim\limits_{K\to\infty}\PP\left(\eta
  e^{\bar{f}(x_0,x_1)\Delta S_2^{\eta}}-\de<\xi_{S_2^{\eta}}^K(x_0)<\eta e^{\bar{f}(x_0,x_1)\Delta S_2^{\eta}}+\de\right)\\
&=\lim\limits_{K\to\infty}\PP\left(\eta
  \epsilon^{\mid\bar{f}(x_0,x_1)\mid/\bar{f}(x_2,x_1)}-\de<\xi_{S_2^{\eta}}^K(x_0)<\eta \epsilon^{\mid\bar{f}(x_0,x_1)\mid/\bar{f}(x_2,x_1)}+\de\right)\\
&=1
\eea
\eeq
where the second equality is due to \eqref{time estimate before second eta threshold}.
Taking the migration from site $x_1$ into account, we thus have
\beq\label{density estimate original type after second eta threshold}
\lim\limits_{K\to\infty}\PP\left(\xi_{S_2^{\eta}}^K(x_0)=O(\epsilon^{\mid\bar{f}(x_0,x_1)\mid/\bar{f}(x_2,x_1)}
\vee \epsilon)\right)=1.
\eeq
We proceed as before for $\widetilde S_1^{\eta}$ in step 1. After time $S_2^{\eta}$, the mass bars on dimorphic system $(\xi_t^K(x_1),\xi_t^K(x_2))$ can be approximated by ODEs and will be switched again in time of order 1 (marked by $\widetilde S_2^{\eta}$ as in Figure \ref{Figure_Phase3type_Micro}), and they are attracted into $\eta-$ neighborhood of $(0,\bar{n}(x_2))$. As for the population density on site $x_0$, one obtains from \eqref{density estimate original type after second eta threshold}
\beq
\lim\limits_{K\to\infty}\left(\xi^K_{\widetilde{S}^{\eta}_2}(x_0)=O(\epsilon^{c_1})\right)=1
\eeq
where $c_1=\frac{|\bar f(x_0,x_1)|}{\bar f(x_2,x_1)}\wedge 1\leq 1$.

\textbf{Step 3.} We now consider the recovery of subpopulation at trait site $x_0$. Recovery arises because of the lack of effective competitions from its neighbor site $x_1$, or under negligible competitions since the local population density on $x_1$ is very low under the control of its fitter neighbor $x_2$.
Without lose of generality, we suppose $c_1:=\frac{\mid\bar{f}(x_0,x_1)\mid}{\bar f(x_2,x_1)}<1$ in \eqref{density estimate original type after second eta threshold}.

Set $S_0^{\eta}=\inf\{t: t>\widetilde S_2^{\eta}, \,\xi_t^K(x_0)\geq \eta\}$. We proceed as before in step 1. From \eqref{density estimate original type after second eta threshold}, $\frac{\xi_{\widetilde S_2^{\eta}}^K(x_0)}{\epsilon^{c_1}}$ converges to some positive constant (say $m_0$) in probability as $K\to\infty$. Thus, by applying law of large numbers to the sequence of processes $\frac{N^K_t}{K\epsilon^{c_1}}$, for any $\de>0,\,T>0,$
\beq
\lim\limits_{K\to\infty}\PP\left(\sup\limits_{0\leq t\leq T}\left|\frac{\xi_t^K(x_0)}{\epsilon^{c_1}}-m_t\right|<\de\right)=1
\eeq
where $m_t$ is governed by logistic equation $\dot{m}=\left(b(x_0)-d(x_0)\right)m$ starting with a positive initial $m_0$.

Following the same way to obtain \eqref{time estimate before eta threshold}, time length $S_0^{\eta}-\widetilde S_2^{\eta}$ can be approximated by time needed for dynamics $m$ to approach $\eta/\epsilon^{c_1}$ level, which is of order $\frac{c_1}{(b(x_0)-d(x_0))}\ln\frac{1}{\epsilon}$, i.e. for any $\de>0$,
\beq\label{time estimate for original recovery}
\lim\limits_{K\to\infty}\PP\left(\left(\frac{c_1}{b(x_0)-d(x_0)}-\de\right)\ln\frac{1}{\epsilon}< S_0^\eta-\widetilde S_2^{\eta}<\left(\frac{c_1}{b(x_0)-d(x_0)}+\de\right)\ln\frac{1}{\epsilon}\right)
=1.
\eeq
At the same time, $\xi^K_t(x_1)$ converges in probability to $\psi_t$ which satisfies equation $\dot{\psi}=\bar{f}(x_1,x_2)\psi$ with $\psi_{\widetilde{S}^{\eta}_2}=\eta$. Then, we can justify the following estimate for population density at site $x_1$,
\beq\label{density estimate first type after original recovery}
\lim\limits_{K\to\infty}\PP\left(\xi_{S_0^{\eta}}^K(x_1)=O(\epsilon^{c_2}
\vee \epsilon)\right)=1
\eeq
where $c_2=\frac{c_1\mid\bar{f}(x_1,x_2)\mid}{b(x_0)-d(x_0)}$.

We now combine all these estimates \eqref{time estimate before eta threshold}, \eqref{time estimate before second eta threshold}, \eqref{time estimate for original recovery} together, and conclude that
\beq
\lim\limits_{K\to\infty}\PP\left(\| X^K_{t\ln\frac{1}{\epsilon}}-\Gamma^{(2)}\|<\de\right)=1
\eeq
for $t>\bar{t}_2:=\frac{1}{\bar{f}(x_1,x_0)}+\frac{1}{\bar{f}(x_2,x_1)}+\frac{c_1}{b(x_0)-d(x_0)}$ under the total variation norm $\|\cdot\|$ on $\M_F(\X)$.
\end{proof}

\begin{proof}[Proof of Theorem \ref{TST Theorem on finite trait space_Micro}]
We proceed the proof by the induction method over the superscript $L\in\N$ of trait space $\X^{(L)}=\{x_0, x_1,\ldots,x_L\}$.

(1). When $L=2$, it is already proved in Proposition \ref{Proposition_3type_Micro} that there exists a constant $\bar t_2>0$ such that for any $t>\bar t_2$
\beq
\lim\limits_{K\to\infty} X^K_{t\ln\frac{1}{\epsilon}}\stackrel{\text{(d)}}{=}\Gamma^{(2)}
\eeq
under the total variation norm.

(2). Without loss of generality,  suppose it holds that for any $L=2l$ there exists a constant $\bar t_{2l}$ such that for any $t>\bar t_{2l}$
\beq
\lim\limits_{K\to\infty} X^K_{t\ln\frac{1}{\epsilon}}\stackrel{\text{(d)}}{=}\Gamma^{(L)}.
\eeq
We need to prove the same relation also holds for the case $L=2l+1$.

We firstly consider the invasion time scale of population at site $x_{2l+1}$.

Denote by $\xi^K_t(x_{2l+1}):=\langle X^K_t, 1_{\{x_{2l+1}\}}\rangle$. If $K\epsilon^{2l+1}\ll 1$, it follows a similar proof as in Proposition \ref{Proposition_3type_Micro}. So, now we only need to consider the case when $K\epsilon^{2l+1}\gg 1$, that is, the mass at site $x_{2l+1}$ is large in the very beginning. In fact, since $\frac{N_0^K}{K}\to n_0$ in law as $K\to\infty$ and the nearest-neighbor mass migrates from site $x_0$ to site $x_{2l+1}$ by passing through $x_1,\ldots, x_{2l}$, one applies the law of large numbers for random processes and obtains that
\beq
\lim\limits_{K\to\infty}\PP\left(\sup\limits_{0\leq t\leq T}\left|\xi^K_t(x_{2l+1})-n_t(x_{2l+1})\right|<\de\right)=1
\eeq
where $n_t(x_{2l+1})$ satisfies the equation $\dot{n}_t(x_{2l+1})=\epsilon^{2l+1}\prod_{j=1}^{2l+1}m(x_j,x_{j-1})n_0$.
So, it takes time of order 1 for $\xi^K_t(x_{2l+1})$ to reach $\epsilon^{2l+1}$ level (mark the time coordinator by $S^{\epsilon}_{2l+1}$).

Set $S^{\eta}_{2l+1}=\inf\{t: t> S^{\epsilon}_{2l+1}, \xi_t^K(x_{2l+1})\geq \eta\}$. For $t\in\left(S^{\epsilon}_{2l+1},S^{\eta}_{2l+1}\right)$, again by law of large numbers, $\frac{\xi^K_t(x_{2l+1})}{\epsilon^{2l+1}}$ converges to $\phi_t$ which satisfies $\phi_0=1$ and
\beq
\dot{\phi}=(b(x_{2l+1})-d(x_{2l+1}))\phi.
\eeq
Thus, $\Delta S^{\eta}_{2l+1}:=S^{\eta}_{2l+1}-S^{\epsilon}_{2l+1}$ can be approximated by the time length (say $\Delta t_{2l+1}$) needed for dynamics $\phi$ to reach $\eta/\epsilon^{2l+1}$ level, i.e.
\beq\label{time estimate for 2l+1 eta threshold1}
\bea
&\lim\limits_{K\to\infty}\PP\left(\left(\frac{2l+1}{b(x_{2l+1})-d(x_{2l+1})}-\de\right)\ln\frac{1}{\epsilon}<\Delta S^{\eta}_{2l+1}<\left(\frac{2l+1}{b(x_{2l+1})-d(x_{2l+1})}+\de\right)\ln\frac{1}{\epsilon}\right)\\
&\lim\limits_{K\to\infty}\PP\left(\left(\frac{2l+1}{b(x_{2l+1})-d(x_{2l+1})}-\de\right)\ln\frac{1}{\epsilon}<\Delta t_{2l+1}<\left(\frac{2l+1}{b(x_{2l+1})-d(x_{2l+1})}+\de\right)\ln\frac{1}{\epsilon}\right)\\
&=1.
\eea
\eeq
We inherit the notation $S^{\eta}_{2l}$ as the hitting time of $\eta$-level for the population at site $x_{2l}$. Due to the hypothesis for $L=2l$ case, we know that $S^{\eta}_{2l}$ is of order
\beq\label{time estimate for 2l eta threshold}
\left(\left[\bar{f}(x_{2l},x_{2l-1})\right]^{-1}+\ldots+\left[\bar{f}(x_1,x_0)\right]^{-1}\right)\ln\frac{1}{\epsilon}.
\eeq
Thanks to assumption (\textbf{B}3), i.e.
\beq
\frac{2l+1}{b(x_{2l+1})-d(x_{2l+1})}>\frac{1}{\bar f(x_{2l},x_{2l-1})}+\ldots+\frac{1}{\bar f(x_1,x_0)},
\eeq
it implies that before time $S^{\eta}_{2l}$, population at site $x_{2l+1}$ is still under negligible level (of order $\epsilon^c$ for some positive constant $c$) and can not influence the invasion process up to $x_{2l}$.

Following a similar procedure as deriving \eqref{time estimate for 2l eta threshold} (see Figure \ref{Figure_Phase3type_Micro}) to analyze the colonization of population at site $x_{2l+1}$ due to migration from site $x_{2l}$ with exponential rate $\bar{f}(x_{2l+1},x_{2l})$, one obtains that $S^{\eta}_{2l+1}$ should be of order
\beq\label{time estimate for 2l+1 eta threshold2}
\left(\left[\bar{f}(x_{2l+1},x_{2l})\right]^{-1}+\ldots+\left[\bar{f}(x_1,x_0)\right]^{-1}\right)\ln\frac{1}{\epsilon}.
\eeq

Comparing two time scale estimates \eqref{time estimate for 2l+1 eta threshold1} and \eqref{time estimate for 2l+1 eta threshold2} for $S^{\eta}_{2l+1}$ under assumption (\textbf{B}3), one gets \eqref{time estimate for 2l+1 eta threshold2} is the right one for the fixation of population at site $x_{2l+1}$.

Now we consider the total recovery time by summing up recovery time of all subpopulation on every second site backwards from $x_{2l+1}$ to $x_0$, one can do calculations repeatedly as in Step 3 of the proof for Proposition \ref{Proposition_3type_Micro}. More precisely, for $1\leq i\leq l$, the initial population $\xi^K(x_{2i-1})$ on site $x_{2i-1}$ which is prepared for recovering is no less than $\epsilon$-level due to the consistent migration from its fitter neighbor site $x_{2i}$. On the other hand, it grows exponentially at least with a rate $\bar{f}(x_{2i-1},x_{2i-2})=b(x_{2i-1})-d(x_{2i-1})-\al(x_{2i-1},x_{2i-2})\bar{n}(x_{2i-2})$ due to the possibly strongest competition from its unfit neighbor $x_{2i-2}$. In all, the recovery time (mark by $S^{\eta,2}_{2i-1}$) for the population $\xi^K(x_{2i-1})$ to reach $\eta$-level can be bounded from above
\beq\label{time estimate for single recovery}
\lim\limits_{K\to\infty}\PP\left(S^{\eta,2}_{2i-1}<\left(\frac{1}{\bar{f}(x_{2i-1},x_{2i-2})}+\de\right)\ln\frac{1}{\epsilon}\right)=1.
\eeq

We now combine both time estimates \eqref{time estimate for 2l+1 eta threshold2} and \eqref{time estimate for single recovery}. Let
\beq
\bar{t}_{2l+1}:=2\left(\left[\bar{f}(x_{2l+1},x_{2l})\right]^{-1}+\ldots+\left[\bar{f}(x_1,x_0)\right]^{-1}\right).
\eeq
Then, one can conclude that for any $t>\bar{t}_{2l+1}$, for any $\de>0$ and $0\leq i\leq l$,

\beq
\bea
&\lim\limits_{K\to\infty}\PP\left(\left|\langle X^K_{t\ln\frac{1}{\epsilon}}, 1_{\{x_{2i+1}\}}\rangle-\bar{n}(x_{2i+1})\right|<\de\right)=1,\\
&\lim\limits_{K\to\infty}\PP\left(\langle X^K_{t\ln\frac{1}{\epsilon}}, 1_{\{x_{2i}\}}\rangle<\de\right)=1.
\eea
\eeq
It follows the conclusion for any $t>\bar{t}_{2l+1}$,
\beq
\lim\limits_{K\to\infty} X^K_{t\ln\frac{1}{\epsilon}}\stackrel{\text{(d)}}{=}\Gamma^{(2l+1)}.
\eeq

\end{proof}

\begin{proof}[Proof of Theorem \ref{TST_Theorem_infinite trait_Micro}]
The proof of this result is similar to the proof of \cite[Theorem 1]{Cha06}. We will not repeat all the details and only focus more on supporting lemmas which are cornerstones of the proof.

For any $\varepsilon >0, \,t>0, \,L\in\N, \,B\subset\X$ measurable, take the integer part $L_1:=\big\lfloor\frac{L+2}{2}\big\rfloor$ and denote by
\beq
\bea
A^{K,\epsilon,\sigma}(\varepsilon,t,L,B): =&\Big\{\textrm{Supp}(X^{K,\epsilon,\sigma}_{\frac{t}{K\sigma}})~\textrm{has}~ L+1 ~\textrm{elements, and}~L_1~\textrm{out of them, ~say}~ \{x_1,\ldots,\\
&\quad x_{L_1}\}\subset B,
 ~\textrm{satisfy}~ \left|\langle X^{K,\epsilon,\sigma}_{\frac{t}{K\sigma}},
 1_{\{x_i\}}\rangle-\bar{n}(x_i)\right|<\varepsilon,\,1\leq i\leq L_1,\\
&\textrm{and the other}~ L+1-L_1 ~\textrm{traits, say}~
 y_1,\ldots,y_{L-L_1}, ~\textrm{satisfy}\\
&\quad\langle X^{K,\epsilon,\sigma}_{\frac{t}{K\sigma}}, 1_{\{y_j\}}\rangle<\varepsilon, 1\leq j\leq L+1-L_1\Big\}.
\eea
\eeq
To the end, it is enough to establish that
\beq
\lim\limits_{K\to\infty}\PP\left(A^{K,\epsilon,\sigma}(\varepsilon,t,L,B)\right)
=\PP\left(\textrm{Supp}(\Gamma_t)\subset B ~\textrm{and has}~L_1~\textrm{elements}~\right)
\eeq
where $(\Gamma_t)_{t\geq 0}$ is defined in Definition \ref{TST_Definition_infinite tree_micro}.

The first key ingredient of the proof is the characterization of exponentially distributed waiting time of each mutation event. 
It can be proved from the expression of the generator \eqref{generator_X_epsilon,sigma_Micro_nearest neighor} 
as done in \cite[Lemma 2 (c)]{Cha06}. We will not show the details here.

\begin{lem}\label{Lemma_exp time in TST_Micro}
Assume that $X_0^{K,\epsilon,\sigma}=\Gamma^{(L)}$, w.o.l., take $L=2l$. Let $\tau$ be the first mutation time after 0. Then,
\beq
\lim\limits_{K\to\infty} \PP\left(\tau>\frac{t}{K\sigma}\right)=\exp\left(-t\sum_{i=0}^l\bar n(x_{2i}^{(2l)})\mu(x_{2i}^{(2l)})\right).
\eeq
\beq
\lim\limits_{K\to\infty} \PP\left(\textrm{at time}\,\tau,\, \textrm{mutant comes from trait}~  x_{2k}^{(2l)}\right)=\frac{\bar n(x_{2k}^{(2l)})\mu(x_{2k}^{(2l)})}{\sum_{i=0}^l\bar n(x_{2i}^{(2l)})\mu(x_{2i}^{(2l)})}.
\eeq
\end{lem}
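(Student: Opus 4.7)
The plan is to read off both assertions from the fourth (mutation) term of the generator \eqref{generator_X_epsilon,sigma_Micro_nearest neighor}, after establishing that, starting from $X_0^{K,\epsilon,\sigma}=\Gamma^{(2l)}$, the process remains uniformly close to the equilibrium configuration $\Gamma^{(2l)}$ throughout the relevant mutation window. Once this stability is in place, the total mutation rate at any time $s$ prior to the first mutation is
\[
K\sigma\int_{\X}\mu(x)\,X^{K,\epsilon,\sigma}_s(dx)\;\approx\;K\sigma\sum_{i=0}^l\bar n(x^{(2l)}_{2i})\mu(x^{(2l)}_{2i}),
\]
and after time-change by $K\sigma$ this becomes a constant, so the first jump time of the resulting counting process is asymptotically exponential with the displayed rate, while the probability that the jump originates from a given trait $x^{(2l)}_{2k}$ is the corresponding ratio of rates. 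The argument is the direct analogue of \cite[Lemma 2(c)]{Cha06} once the stability step is secured.

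For the stability step I would argue as follows. By Theorem \ref{TST Theorem on finite trait space_Micro}, the mutation-free process relaxes to $\Gamma^{(2l)}$ on the fixation time scale $\ln(1/\epsilon)$, which is by \eqref{rare mutation condition} negligible with respect to $1/(K\sigma)$; coupled with the mutation-free process up to time $\tau$, the process $X^{K,\epsilon,\sigma}$ therefore remains within an arbitrarily small $\delta$-neighbourhood of $\Gamma^{(2l)}$, apart from a large-deviation-type escape from the basin of attraction of the underlying Lotka--Volterra equilibrium. The right-hand side of \eqref{rare mutation condition}, $1/(K\sigma)\ll e^{KC}$, is used precisely to rule out such an escape before the first mutation via Freidlin--Wentzell exit-time estimates \cite{FW84}, giving
\[
\PP\Bigl(\sup_{0\le s\le \tau\wedge t/(K\sigma)}\|X^{K,\epsilon,\sigma}_s-\Gamma^{(2l)}\|>\delta\Bigr)\xrightarrow[K\to\infty]{}0.
\]
On this event the unfit traits $x^{(2l)}_{2i-1}$ carry mass at most $\delta$, so they contribute negligibly to both the total mutation rate and to the trait-wise rates.

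Given Step 1, let $N_s$ be the number of mutation events in $[0,s]$; from \eqref{generator_X_epsilon,sigma_Micro_nearest neighor} the predictable compensator of $N_{t/(K\sigma)}$ is
\[
\int_0^{t/(K\sigma)}\!\!K\sigma\int_{\X}\mu(x)\,X^{K,\epsilon,\sigma}_u(dx)\,du\;=\;t\sum_{i=0}^l\bar n(x^{(2l)}_{2i})\mu(x^{(2l)}_{2i})+o(1),
\]
so the convergence of first jump times of counting processes with converging compensators (e.g. Ethier--Kurtz, Ch.~4) gives the claimed exponential limit for $K\sigma\tau$. The identity of the mutating trait is decided at the jump with probability equal to the ratio of the trait-wise rate to the total, and Step 1 together with continuity yields the second assertion as a by-product.

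The main obstacle is precisely Step 1: controlling the stochastic process on a window of length $1/(K\sigma)$, which is super-polynomial in $K$, without either the migration dynamics sending mass into the unfit sites or the environment producing a rare fluctuation out of the attracting neighbourhood of $\Gamma^{(2l)}$. All other ingredients--the compensator computation, the time-change, and the identification of the source--are routine generator and martingale manipulations essentially identical to those in \cite[Lemma 2(c)]{Cha06}; the genuinely new input is the interplay between the fixation time $\ln(1/\epsilon)$, the mutation time $1/(K\sigma)$, and the exit time $e^{KC}$ from the attractor, all neatly sandwiched by assumption \eqref{rare mutation condition}.
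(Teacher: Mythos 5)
Your proposal is correct and follows exactly the route the paper intends: the paper itself gives no details for this lemma, deferring entirely to the generator expression and to \cite[Lemma 2(c)]{Cha06}, with the stability of $\Gamma^{(2l)}$ on the mutation time scale justified by the Freidlin--Wentzell bound encoded in the right-hand side of \eqref{rare mutation condition}. Your write-up in fact supplies more of the compensator/time-change detail than the paper does, and correctly isolates the stability step as the only non-routine ingredient.
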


The second ingredient can been seen as a corollary of Theorem \ref{TST Theorem on finite trait space_Micro}. It demonstrates that fixation of new configuration takes time of order $\ln\frac{1}{\epsilon}$, which is invisible on the mutation time scale.
\begin{lem}\label{Lemma_new config. TST_Micro}
Assume that $X_0^{K,\epsilon,\sigma}=\Gamma^{(2l)}+\frac{1}{K}\delta_{x_{2k}^{(2l)}+h}$ for some $0\leq k\leq l$. Then there exists a constant $C>0$, for any $\delta>0$,  such that
\beq
 \lim\limits_{K\to\infty}\PP\left(\tau>C\ln\frac{1}{\epsilon},\,\sup\limits_{t\in(C\ln\frac{1}{\epsilon}, \tau)}\|X_t^{K,\epsilon, \sigma}-\Gamma^{(2l+1)}\|<\delta\right)=1
\eeq
where $\Gamma^{(2l+1)}$ is defined as in Definition \ref{TST_Theorem_infinite trait_Micro} (i) and $\|\cdot\|$ is the total variation distance.
\end{lem}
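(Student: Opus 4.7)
I will decompose the lemma into two independent claims:
(i) with probability tending to one, no mutation occurs in the time window $[0, C\ln(1/\epsilon)]$, i.e.\ $\tau > C\ln(1/\epsilon)$;
(ii) on this mutation-free window, the process reaches a $\delta/2$-neighborhood of $\Gamma^{(2l+1)}$ by time $C\ln(1/\epsilon)$, and then remains inside a $\delta$-neighborhood of $\Gamma^{(2l+1)}$ until the next mutation $\tau$.

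For (i), between mutations the process is driven by the first three terms of \eqref{generator_X_epsilon,sigma_Micro_nearest neighor}, and assumption (\textbf{A}1) provides a uniform-in-$K$ stochastic upper bound $\langle X^{K,\epsilon,\sigma}_t,1\rangle\le M$ on any polynomially-long time interval via the logistic competition term. Conditional on that bound, the instantaneous rate of mutation is bounded above by $K\sigma\bar\mu M$, so the first mutation time stochastically dominates an exponential random variable of rate $K\sigma\bar\mu M$ and, by the left-hand side of \eqref{rare mutation condition},
\[
\PP\bigl(\tau>C\ln\tfrac{1}{\epsilon}\bigr)\ge e^{-CK\sigma\bar\mu M\ln(1/\epsilon)}-o(1)\longrightarrow 1.
\]

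For the first half of (ii), I condition on the event $\{\tau>C\ln(1/\epsilon)\}$ and drive the dynamics by the mutation-free generator on the enlarged finite trait space $\{x_0^{(2l)},\dots,x_{2l}^{(2l)},x_{2k}^{(2l)}+h\}$, starting from the polymorphic but ``structured'' configuration $\Gamma^{(2l)}+\frac{1}{K}\delta_{x_{2k}^{(2l)}+h}$. I then adapt the phase-by-phase analysis of Steps 1--3 in the proof of Proposition \ref{Proposition_3type_Micro} and the induction carried out in the proof of Theorem \ref{TST Theorem on finite trait space_Micro} to this non-monomorphic starting datum. The two sub-cases in Definition \ref{TST_Definition_infinite tree_micro}(i) have to be distinguished: if $x_{2j}^{(2l)}\prec x_{2k}^{(2l)}+h\prec x_{2j+1}^{(2l)}$, the mutant is fitter than its dominant equilibrium competitor and a linear birth-death lower bound drives its density to $\epsilon$ in time $O(1)$; the subsequent cascade of exponential growth and Lotka--Volterra attraction to $\eta$-neighborhoods then reshapes the alternation pattern below the mutant and produces $\Gamma^{(2l+1)}$ in additional time $O(\ln(1/\epsilon))$. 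If instead $x_{2j-1}^{(2l)}\prec x_{2k}^{(2l)}+h\prec x_{2j}^{(2l)}$, the mutant is subcritical against its dominant higher-fitness neighbor and a coupling to a subcritical branching process forces extinction in time $O(\log K)\ll\ln(1/\epsilon)$, leaving $\Gamma^{(2l)}$ in place, which under the relabelling of Definition \ref{TST_Definition_infinite tree_micro} coincides with $\Gamma^{(2l+1)}$.

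For the second half of (ii), once $X^{K,\epsilon,\sigma}_{C\ln(1/\epsilon)}$ lies within $\delta/2$ of $\Gamma^{(2l+1)}$, I propagate this closeness up to $\tau$ by appealing to the linear stability of $\Gamma^{(2l+1)}$ for the finite-dimensional Lotka--Volterra system associated with the enlarged trait space. By the Freidlin--Wentzell exit-time estimates (see \cite{FW84}), the time for the stochastic system to exit a fixed $\delta$-neighborhood of $\Gamma^{(2l+1)}$ is of order $\exp(KV)$ for some $V=V(\delta)>0$; the right-hand side of \eqref{rare mutation condition} gives $\tau=O(1/(K\sigma))\ll\exp(KV)$, so the exit cannot precede $\tau$ with high probability. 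The main technical obstacle is the subcritical sub-case, where the dominant neighbor of the mutant is itself an order-$\eta$ stochastic fluctuation of the deterministic equilibrium rather than a constant, so a uniform-in-$K$ control of these fluctuations during the entire extinction window is required to legitimately couple to a strictly subcritical branching process -- this is where the most delicate estimates have to be spent.
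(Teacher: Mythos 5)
Your overall skeleton matches the paper's: (i) use the exponential mutation clock (Lemma \ref{Lemma_exp time in TST_Micro}) and the left-hand side of \eqref{rare mutation condition} to get $\tau>C\ln\frac{1}{\epsilon}$ with high probability, (ii) run the mutation-free finite-trait dynamics as in Proposition \ref{Proposition_3type_Micro} and Theorem \ref{TST Theorem on finite trait space_Micro} to reach the new configuration in time $O(\ln\frac{1}{\epsilon})$, and (iii) keep the process near the equilibrium until $\tau$ via Freidlin--Wentzell, using the right-hand side of \eqref{rare mutation condition}. The first sub-case is also handled in the same spirit as the paper (the mutant invades its populated unfit neighbour $x_{2j}^{(2l)}$ and the alternation below it flips by a cascade).

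However, your treatment of the second sub-case, $x_{2j-1}^{(2l)}\prec x_{2k}^{(2l)}+h\prec x_{2j}^{(2l)}$, is wrong. You claim the mutant simply dies out and ``$\Gamma^{(2l)}$ \dots coincides with $\Gamma^{(2l+1)}$'' after relabelling. It does not: by the second bullet of Definition \ref{TST_Definition_infinite tree_micro}(i), $\Gamma^{(2l+1)}=\sum_{i=1}^j\bar n(x^{(2l)}_{2i-1})\delta_{x^{(2l)}_{2i-1}}+\sum_{i=j}^l\bar n(x^{(2l)}_{2i})\delta_{x^{(2l)}_{2i}}$, whereas $\Gamma^{(2l)}=\sum_{i=0}^l\bar n(x^{(2l)}_{2i})\delta_{x^{(2l)}_{2i}}$; unless $j=0$ these differ, since below level $2j$ the populated sites switch from the even-indexed to the odd-indexed traits. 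The mechanism you miss is that competition and migration are nearest-neighbour \emph{in the fitness ordering} (assumption (\textbf{C}2)): once the mutant is inserted between $x_{2j-1}^{(2l)}$ and $x_{2j}^{(2l)}$, the previously suppressed trait $x_{2j-1}^{(2l)}$ no longer competes with the populated $x_{2j}^{(2l)}$ but only with the (essentially unpopulated) mutant, so it recovers to $\bar n(x_{2j-1}^{(2l)})$, which in turn suppresses $x_{2j-2}^{(2l)}$, and the whole alternation below the insertion point flips exactly as in Step 3 of Proposition \ref{Proposition_3type_Micro}. This ``recovery of temporarily unfit types'' is the central feature of the model (see the abstract and the remark after Definition \ref{TST_Definition_infinite tree_micro}), so a proof that outputs the unchanged configuration in this sub-case establishes the wrong limit. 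A secondary quibble: the mutant need not go extinct in the branching-process sense, since migration from its occupied neighbour maintains it at an $O(\epsilon)$ level; what matters is only that its rescaled mass vanishes in the limit, while its mere presence as a trait site permanently alters the nearest-neighbour structure.
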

\begin{proof}
From Lemma \ref{Lemma_exp time in TST_Micro}, one concludes that, for any $C>0$,
\[
\lim\limits_{\epsilon\to 0}\PP\big(\tau^{\epsilon}>C\ln\frac{1}{\epsilon}\big)=1.
\]
According to the fitness landscape, there will be one and only one ordered position for the new arising trait $x_{2k}^{(2l)}+h$ in $\Gamma^{(2l)}$. Suppose there exists $x_{2j}^{(2l)}$ such that $x_{2k}^{(2l)}+h$ fits between $x_{2j}^{(2l)}$ and $x_{2j+1}^{(2l)}$. Then, one has the local fitness order
\beq
x_{2j-1}^{(2l)}\prec x_{2j}^{(2l)}\prec x_{2k}^{(2l)}+h\prec x_{2j+1}^{(2l)}.
\eeq

Since it is unpopulated for both traits $x_{2j-1}^{(2l)}$ and $x_{2j+1}^{(2l)}$ in $\Gamma^{(2l)}$, we consider $\big(x_{2j}^{(2l)},\, x_{2k}^{(2l)}+h\big)$ as an isolated pair without competition from others. As the same analysis as being done in Proposition \ref{Proposition_3type_Micro}, the two-type system will converge to $\big(0,\,\bar n(x_{2k}^{(2l)}+h)\delta_{x_{2k}^{(2l)}+h}\big)$ in time of order $O\left(\ln\frac{1}{\epsilon}\right)$. On the right hand side of the isolated pair, nothing changes due to their isolation. Whereas on the left hand side of the pair, trait $x_{2j-1}^{(2l)}$ increases exponentially due to the decay of its fitter neighbor $x_{2j}^{(2l)}$. So on and so forth, the mass occupation flips on the left hand side of $x_{2j}^{(2l)}$.  As the same arguments in the finite trait space case (see \emph{Proof of Theorem \ref{TST Theorem on finite trait space_Micro}}), the entire rearrangement process can be completed in time of order $O(\ln\frac{1}{\epsilon})$.

In a similar method, we can prove the other case when the fitness location of $x^{(2l)}_{2k}+h$ is on the left hand side of $x^{(2l)}_{2j}$, that is,

\[
x_{2j-1}^{(2l)}\prec x_{2k}^{(2l)}+h\prec x_{2j}^{(2l)}\prec x_{2j+1}^{(2l)}.
\]

In all, we conclude the new configuration $\Gamma^{(2l+1)}$ by relabeling the traits as done in Definition \ref{TST_Definition_infinite tree_micro} (i).
\end{proof}
Thus we conclude the proof of the Theorem  \ref{TST_Theorem_infinite trait_Micro}.

\end{proof}


\end{document}